\theoremstyle{plain}
\newtheorem{thm}{Theorem}[section]
\newtheorem{cor}[thm]{Corollary}
\newtheorem{lemma}[thm]{Lemma}
\newtheorem{prop}[thm]{Proposition}
\theoremstyle{remark}
\newtheorem*{defn}{\textbf{Definition}}
\newtheorem*{ex}{\textbf{Example}}
\newtheorem*{rmk}{\textbf{Remark}}
\numberwithin{equation}{section}
\newcommand{\AW}{\mathrm{aw}}
\newcommand{\Bc}{\mathcal{B}}
\newcommand{\C}{\mathbb C}
\newcommand{\tc}{{\tilde c}}
\newcommand{\tco}{{\tilde c_0}}
\newcommand{\Fc}{\mathcal F}
\newcommand{\Hc}{\mathcal H}
\newcommand{\Id}{\mathrm{Id}}
\newcommand{\Lc}{\mathcal{L}}
\newcommand{\N}{\mathbb N}
\newcommand{\ol}{\overline}
\newcommand{\Pt}{{\hat P_\theta}}
\newcommand{\R}{\mathbb{R}}
\newcommand{\Sc}{\mathcal{S}}
\newcommand{\SL}{\mathrm{SL}}
\newcommand{\T}{\mathbb{T}}
\newcommand{\Tr}{\mathrm{Tr}\,}
\newcommand{\ve}{\varepsilon}
\newcommand{\Vol}{\mathrm{Vol}}
\newcommand{\W}{\mathrm{w}}
\newcommand{\wh}{\widehat}
\newcommand{\Z}{\mathbb{Z}}
\title[Small scale quantum ergodicity in cat maps. II.]{Small scale quantum ergodicity in cat maps. II. Quasimodes that are not equidistributed at the logarithmical scales}
\author{Xiaolong Han}
\email{xiaolong.han@csun.edu}
\address{Department of Mathematics, California State University, Northridge, CA 91330, USA}
\subjclass[2010]{35P20, 58G25, 81Q50, 37D20, 11F25}
\keywords{Hyperbolic linear maps of the torus, quantum ergodicity, small scale}
\thanks{} 
\date{}
\begin{document}
\maketitle

\begin{abstract}
In this series, we investigate quantum ergodicity at small scales for linear hyperbolic maps of the torus (``cat maps''). In Part II of the series, we construct quasimodes that are quantum ergodic but are not equidistributed at the logarithmical scales.
\end{abstract}

\section{Introduction}
We recall the setup of cat maps briefly and refer to Part I \cite{Han} of the series for more background. A classical cat map on the torus $\T^2$ is defined by a matrix $M\in\SL(2,\Z)$ with $|\Tr M|>2$. Its iterations $M^t$ ($t\in\Z$) induce a discrete hyperbolic (i.e., chaotic) dynamical system.  

In the quantum cat system, the phase space is the $2$-$\dim$ torus $\T^2=\{(q,p):q,p\in\T^1\}$, in which $q$ and $p$ denote the position and momentum variables, respectively. Therefore, a quantum state can be represented by a distribution $\psi(q)$ on $\R^1$ such that $\psi$ and its Fourier transform are both periodic. One can then determine that the spaces of such quantum states are $N$-$\dim$ Hilbert spaces with $N\in\N$, by which we denote $\Hc_N$. Finally, the quantum cat map $\hat M$ is a unitary operator acting on $\Hc_N$ and $\hbar=1/(2\pi N)\to0$ plays the role of the Planck parameter. 

One of the main problems in \textit{Quantum Chaos} for cat maps is concerned with the density distribution of eigenstates (or more generally, approximate eigenstates, i.e., quasimodes) of $\hat M$ as $\hbar\to0$.

The density distribution of a quantum state $\psi$ in the physical space can be studied via $\int_\Omega|\psi(q)|^2\,dq$, in which $\Omega\subset\T^1$. In particular, we say that a sequence of normalized states $\{\psi_j\}_{j=1}^\infty$ with $\psi_j\in\Hc_{N_j}$ tend equidistributed at the macroscopic scale in the physical space $\T^1$ if for any open subset $\Omega\subset\T^1$,
$$\int_\Omega|\psi_j(q)|^2\,dq\to\Vol(\Omega)\quad\text{as }j\to\infty.$$
We say that $\{\psi_j\}_{j=1}^\infty$ tend equidistributed at a small scale $r=r(N)\to0$ as $N\to\infty$ in the physical space if 
$$\int_{B_1(q_0,r_j)}|\psi_j(q)|^2\,dq=\Vol(B_1(q_0,r_j))+o(r_j)\quad\text{as }j\to\infty,$$
uniformly for all $q_0\in\T^1$. Here, $r_j=r(N_j)$ and $B_d(x,r)$ is the geodesic ball in $\T^d$ with radius $r$ and center $x$.

The density distribution of a quantum state $\psi$ in the phase space $\T^2$ can be studied via $\braket{\psi|\hat f|\psi}$, in which $f\in C^\infty(\T^2)$ and $\hat f$ is its quantization. In particular, we say that a sequence of normalized states $\{\psi_j\}_{j=1}^\infty$ with $\psi_j\in\Hc_{N_j}$ is quantum ergodic (i.e., they tend equidistributed at the macroscopic scale in the phase space $\T^2$) if for all $f\in C^\infty(\T^2)$,
$$\braket{\psi_j|\hat f|\psi_j}\to\int_{\T^2}f(x)\,dx\quad\text{as }j\to\infty.$$
Here, $dx$ is the Lebesgue measure on $\T^2$. It is obvious that a quantum ergodic sequence is also equidistributed in the physical space.

We say that $\{\psi_j\}_{j=1}^\infty$ is quantum ergodic at a small scale $r=r(N)\to0$ as $N\to\infty$ if
$$\braket{\psi_j|\hat b_{x,r_j}^\pm|\psi_j}=\Vol(B_1(x,r_j))+o\left(r_j^2\right)\quad\text{as }j\to\infty,$$
uniformly for all $x\in\T^2$. Here, $b_{x,r}^\pm$ are the appropriate smooth functions that approximate the indicator function of $B_2(x,r)\subset\T^2$. See Subsection \ref{sec:AW} for more details.

Quantum ergodicity at small scales characterize refined density distribution properties of the states than the one at the macroscopic scale. Establishing these small scale results in various dynamical systems, as well as proving optimal scales for such properties, have attracted a lot of attention. See Part I \cite{Han} of the series for the recent history.

In the context of cat maps, the Quantum Ergodicity theorem \cite{BouDB, Ze} asserts that a full density subsequence of any eigenbasis of the quantum cat map is quantum ergodic at the macroscopic scale. In Part I \cite{Han} of the series, we established quantum ergodicity of eigenstates at various small scales. In particular, we proved that a full density subsequence of any eigenbasis is quantum ergodic at logarithmical scales $(\log N)^{-\alpha}$ for some $\alpha>0$. In addition, the scale of quantum ergodicity can be greatly improved to polynomial scales $N^{-\beta}$ for some $\beta>0$, under certain conditions such as the Hecke symmetry requirement. 

In Part II, we investigate the potential failure of quantum ergodicity at certain small scales. In particular, we construct approximate eigenstates (i.e., quasimodes) which are quantum ergodic at the macroscopic scale but fail quantum ergodicity at some logarithmical scales. Here, we say that a sequence of states $\{\psi_j\}_{j=1}^\infty$ are quasimodes of order $R(N)\to0$ as $N\to\infty$ if $\psi_j\in\Hc_{N_j}$ and there are $\{\phi_j\}_{j=1}^\infty\subset\R$ such that
\begin{equation}\label{eq:qm}
\left\|\left(\hat M-e^{i\phi_j}\right)\psi_j\right\|_{L^2(\T^1)}=R(N_j)\|\psi_j\|_{L^2(\T^1)}.
\end{equation}
Note that since $\hat M$ is unitary, the eigenvalues (or energy) have module one. Therefore, $e^{i\phi_j}$ is the ``quasi-energy'' of the quasimode $\psi_j$. Setting the reminder $R=0$ reduces the quasimodes to exact eigenstates. 

Our main theorem states that
\begin{thm}\label{thm:SSQE}
Let $M$ be a cat map on $\T^2$. Then there exist a quantum ergodic sequence of normalized quasimodes $\{\psi_j\}_{j=1}^\infty$ of order $O((\log N)^{-1/2})$ such that $\psi_j\in\Hc_{N_j}$ satisfies the following non-equidistribution conditions.

For any $\ve>0$, there are constants $c_0=c_0(\ve,M)>0$ and $j_0=j_0(\ve,M)\in\N$ such that for all $j\ge j_0$,
\begin{enumerate}[(i).]
\item at the scale $r_j=c_0(\log N_j)^{-1}$ in the physical space,
$$\sup_{q\in\T^1}\left\{\frac{\int_{B_1(q,r_j)}|\psi_j|^2}{\Vol(B_1(q,r_j))}\right\}\ge\ve^{-1}\quad\text{and}\quad\inf_{q\in\T^1}\left\{\frac{\int_{B_1(q,r_j)}|\psi_j|^2}{\Vol(B_1(q,r_j))}\right\}\le\ve,$$
\item at the scale $r_j=c_0(\log N_j)^{-1/2}$ in the phase space,
$$\sup_{x\in\T^2}\left\{\frac{\braket{\psi_j|\hat b_{x,r}^\pm|\psi_j}}{\Vol(B_2(x,r_j))}\right\}\ge\ve^{-1}\quad\text{and}\quad\inf_{x\in\T^2}\left\{\frac{\braket{\psi_j|\hat b_{x,r}^\pm|\psi_j}}{\Vol(B_2(x,r_j))}\right\}\le\ve.$$
\end{enumerate}
\end{thm}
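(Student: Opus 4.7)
The plan is to construct $\psi_j$ as a \emph{time-averaged coherent state} following a well-known recipe from semiclassical analysis on hyperbolic systems. Fix a generic point $x_0\in\T^2$ whose orbit under $M$ is non-periodic, equidistributes, and satisfies a mild Diophantine self-separation condition (any Lebesgue-typical $x_0$ works). For each $N_j$, let $\psi_{0,j}\in\Hc_{N_j}$ be the $\T^2$-periodized isotropic Gaussian coherent state at $x_0$ of width $\sqrt{\hbar_j}$, where $\hbar_j=(2\pi N_j)^{-1}$. Denote by $\lambda>0$ the Lyapunov exponent of $M$, fix once and for all some $\alpha\in(0,1/(2\lambda))$, set $T_j=\lfloor\alpha\log N_j\rfloor$, and define
\begin{equation*}
\psi_j = \frac{1}{\mathcal{N}_j}\sum_{t=-T_j}^{T_j} e^{-it\phi_j}\,\hat M^t \psi_{0,j},
\end{equation*}
with any fixed quasi-phase $\phi_j\in\R$ and normalizing constant $\mathcal{N}_j$ making $\|\psi_j\|_{L^2}=1$.

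The first block of estimates establishes the quasimode relation and macroscopic quantum ergodicity. A telescoping argument using unitarity of $\hat M$ shows that $(\hat M-e^{i\phi_j})\psi_j$ is $\mathcal{N}_j^{-1}$ times a difference of two terms of the form $\hat M^{\pm T_j+O(1)}\psi_{0,j}$, hence has $L^2$-norm $\leq 2/\mathcal{N}_j$. Expanding $\|\psi_j\|^2$ yields a diagonal contribution of $2T_j+1$ plus $O(T_j^2)$ cross terms $\langle\psi_{0,j},\hat M^k\psi_{0,j}\rangle$, which are overlaps between an isotropic Gaussian at $x_0$ and an anisotropic Gaussian at $M^kx_0$ with principal widths $e^{\pm\lambda|k|}\sqrt{\hbar_j}$. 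The condition $\alpha<1/(2\lambda)$ together with the Diophantine property of $x_0$ makes each such cross term smaller than any negative power of $N_j$, so $\mathcal{N}_j^2=(2T_j+1)(1+o(1))$ and hence the quasimode order is $O((\log N_j)^{-1/2})$. For the macroscopic test, the exact Egorov identity for cat maps, $\hat M^{-t}\hat f\hat M^t=\widehat{f\circ M^t}$, reduces the diagonal of $\braket{\psi_j|\hat f|\psi_j}$ to the classical Birkhoff average $(2T_j+1)^{-1}\sum_{|t|\leq T_j}f(M^tx_0)$, which converges to $\int_{\T^2}f\,dx$ by ergodicity of $M$; off-diagonal terms are negligible by the same cross-term bound.

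The non-equidistribution claims then reduce to a counting argument on the orbit segment $\{M^tx_0:|t|\leq T_j\}$ of cardinality $2T_j+1=O(\log N_j)$, which carries essentially all of the mass of $|\psi_j|^2$ and of the microlocal profile. For~(ii) at scale $r_j=c_0(\log N_j)^{-1/2}$, $\T^2$ decomposes into $\sim(\log N_j)/c_0^2$ pairwise-disjoint balls of radius $r_j$; at most $O(\log N_j)$ of them are visited by the orbit, so choosing $c_0$ small produces many empty balls and forces $\inf\leq\ve$. Conversely, any ball centered at an orbit point contributes at least $(2T_j+1)^{-1}\Vol(B_2(x,r_j))^{-1}\geq(2\pi\alpha c_0^2)^{-1}\geq \ve^{-1}$ for $c_0$ small. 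Projecting the orbit onto $\T^1$ produces $O(\log N_j)$ points and yields~(i) at scale $c_0(\log N_j)^{-1}$ by the same pigeonhole argument. The main obstacle is controlling the microlocal spread of $\hat M^t\psi_{0,j}$ uniformly in $|t|\leq T_j$: at time $t$ the Wigner function spreads to scale $\sqrt{\hbar_j}e^{\lambda|t|}$ along the unstable direction, and one must check this stays $\ll r_j$ in both~(i) and~(ii). This is precisely where the sub-Ehrenfest choice $\alpha<1/(2\lambda)$ provides the needed buffer: the worst-case spread is $N_j^{-1/2+\alpha\lambda}$, which is much smaller than either $(\log N_j)^{-1}$ or $(\log N_j)^{-1/2}$.
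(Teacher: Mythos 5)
Your proposal takes a genuinely different route from the paper's.  Where the paper builds its quasimodes around a sequence of \emph{closed prime orbits} $\gamma_j$ with $|\gamma_j|\to\infty$, obtained from Sigmund's theorem, and exploits the arithmetic fact (Proposition~\ref{prop:periodicpts}) that the $T$-periodic points lie on the lattice $\Lc_l$ with $l\sim e^{\lambda T}$ and hence are separated by $\ge e^{-\lambda T}$, you instead average $\hat M^t$ over a symmetric time window along a \emph{single Lebesgue-generic non-periodic} orbit, using Birkhoff ergodicity in place of Sigmund for the macroscopic limit and a Borel--Cantelli/Diophantine self-separation bound in place of the lattice estimate.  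Both ingredients are essentially sound.  Your route is arguably more elementary and self-contained for Theorem~\ref{thm:SSQE}, and the $c/T^2$-type separation that a generic orbit enjoys is in fact \emph{better} than the $e^{-\lambda T}$ separation of periodic points, which is why you can take $T_j$ closer to the Ehrenfest time.  What you lose is generality: because a typical orbit automatically equidistributes, this construction cannot produce the other invariant measures needed for Theorem~\ref{thm:sc}; the paper's choice of closed orbits with $\mu_{\gamma_j}\to\mu$ is what makes the full statement work.

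Two points need tightening.  First, the Diophantine self-separation condition is asserted but not justified; a short Borel--Cantelli argument over pairs $(t,s)$ is needed (the measure of $\{x_0:\mathrm{dist}_{\T^2}(M^tx_0,M^sx_0)<\ve\}$ is $\asymp\ve^2$ for $t\ne s$, so summing $\ve_{t,s}=c(|t|+|s|+1)^{-2}$ gives an a.e.\ lower bound $\ge c(x_0)(|t|+|s|+1)^{-2}$).  Second, and more substantively, your cross-term estimate as written is not quite right for the full range $\alpha\in(0,1/(2\lambda))$.  You describe $\langle\psi_{0,j},\hat M^k\psi_{0,j}\rangle$ with $|k|\le 2T_j$ as the overlap of a fixed Gaussian with an anisotropic Gaussian of long width $\sqrt{\hbar_j}\,e^{\lambda|k|}$.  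When $|k|\sim 2T_j$ this width is $\sim N_j^{2\alpha\lambda-1/2}$, which for $\alpha\ge 1/(4\lambda)$ reaches or exceeds the size of the fundamental domain; the periodized state then wraps around the torus and the ``two Gaussians in the plane'' picture no longer controls the torus inner product (this is exactly the interference regime the paper avoids by requiring $\delta<1/4$, i.e.\ $\alpha<1/(4\lambda)$).  The fix is either to restrict to $\alpha<1/(4\lambda)$ (which costs nothing for the theorem, since any $\alpha>0$ already gives quasimode order $O((\log N)^{-1/2})$), or to apply the unitarity of $\hat M$ first, writing $\langle\hat M^t\psi_{0,j},\hat M^s\psi_{0,j}\rangle=\langle\hat M^{t-m}\psi_{0,j},\hat M^{s-m}\psi_{0,j}\rangle$ with $m=\lfloor (t+s)/2\rfloor$ so that both evolution times stay $\le T_j+1$ and the Husimi concentrations remain inside one cell.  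With that adjustment the disjointness argument and your $\alpha<1/(2\lambda)$ range go through.
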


Theorem \ref{thm:SSQE} does not apply to exact eigenstates and we leave it for the future study. In the case of quasimodes, we in fact prove a much more general result than Theorem \ref{thm:SSQE}. To state the result, we say that a measure $\mu$ on $\T^2$ is a semiclassical measure induced by a sequence of states $\{\psi_j\}_{j=1}^\infty$ if for all $f\in C^\infty(\T^2)$
\begin{equation}\label{eq:scmeasures}
\braket{\psi_j|\hat f|\psi_j}\to\int_{\T^2}f(x)\,d\mu\quad\text{as }j\to\infty.
\end{equation}
So the semiclassical measure $\mu$ characterizes the density distribution of the states $\{\psi_j\}_{j=1}^\infty$ at the macroscopic scale. For example, if $\mu=\mu_\gamma$ as the delta measure on some closed orbit of the classical cat map $M$ on $\T^2$, then the states concentrate near the orbit asymptotically and are said to be ``scarring''. (See \eqref{eq:delta} for the delta measure $\mu_\gamma$ defined on a closed orbit $\gamma$.) On the other hand, the states are quantum ergodic at the macroscopic scale if and only if the corresponding semiclassical measure coincides with the Lebesgue measure.

Our next theorem states that

\begin{thm}\label{thm:sc}
Let $M$ be a cat map on $\T^2$ and $\mu$ be an invariant probability measure of $M$. Then there exists a sequence of normalized quasimodes $\{\psi_j\}_{j=1}^\infty$ of order $O((\log N)^{-1/2})$ with corresponding semiclassical measure $\mu$ such that the non-equidistribution conditions (i) and (ii) in Theorem \ref{thm:SSQE} hold.
\end{thm}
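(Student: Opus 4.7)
I will construct each $\psi_j$ as a normalized sum of propagated coherent states along a well-chosen orbit segment of $M$. Let $\chi_j\in\Hc_{N_j}$ be a standard Gaussian coherent state centered at a point $x_j\in\T^2$ with phase-space width $\hbar^{1/2}$, fix $T_j=\lfloor c\log N_j\rfloor$ with $c>0$ smaller than $1/(2\lambda)$, where $\lambda$ is the Lyapunov exponent of $M$, pick any $\phi_j\in\R$, and set
$$\psi_j=C_j^{-1}\sum_{t=0}^{T_j-1}e^{-it\phi_j}\,\hat M^t\chi_j,$$
with $C_j>0$ chosen so that $\|\psi_j\|=1$. Unitarity of $\hat M$ yields the telescoping identity $(\hat M-e^{i\phi_j})\psi_j=C_j^{-1}(e^{-iT_j\phi_j}\hat M^{T_j}\chi_j-\chi_j)$ of norm at most $2/C_j$. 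Because the coherent states $\hat M^t\chi_j$ are centered at distinct phase-space points $M^tx_j$ and stay Gaussian-like throughout the Ehrenfest window $[0,T_j]$, they are asymptotically orthogonal, forcing $C_j\sim\sqrt{T_j}$ and hence the quasimode order $O((\log N_j)^{-1/2})$.

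Next, choose $x_j$ so that the orbit-averaged empirical measures $\mu_j:=T_j^{-1}\sum_{t=0}^{T_j-1}\delta_{M^tx_j}$ converge weak-$*$ to $\mu$. For $\mu$ ergodic this is Birkhoff's theorem applied to a $\mu$-generic $x_j$; for general invariant $\mu$ it follows from the density of orbit-segment measures in the set of $M$-invariant probability measures, which holds for Anosov systems via the specification property. Alternatively, one may superpose coherent-state sums built from finitely many ergodic components with appropriate weights and well-separated starting points. Egorov's theorem for the quantum cat map applied for times $|t|\le T_j$, still inside the Ehrenfest window by our choice of $c$, then yields
$$\braket{\psi_j|\hat f|\psi_j}=\frac{1}{T_j}\sum_{t=0}^{T_j-1}f(M^tx_j)+o(1)\longrightarrow\int_{\T^2}f\,d\mu,$$
since the off-diagonal terms $\braket{\hat M^t\chi_j|\hat f|\hat M^{t'}\chi_j}$ with $t\ne t'$ involve coherent states separated by distance $\gg\hbar^{1/2}$ in phase space and are therefore exponentially small. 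Hence the semiclassical measure of $\{\psi_j\}$ equals $\mu$.

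The non-equidistribution at logarithmic scales is then a geometric consequence of the thin ``tube'' structure of $\psi_j$: its Wigner/Husimi distribution is essentially supported on the union of $T_j$ Gaussian ellipses of area $\sim\hbar$ centered at the points $M^tx_j$, so it covers a total phase-space area $\lesssim T_j\hbar\sim(\log N_j)/N_j\ll 1$. Centering the ball $B_2(x_j,r_j)$ at $x_j$ with $r_j=c_0(\log N_j)^{-1/2}$ captures the full $t=0$ coherent state, giving $\braket{\psi_j|\hat b_{x_j,r_j}^{\pm}|\psi_j}\gtrsim 1/T_j$; dividing by $\Vol(B_2(x_j,r_j))\sim c_0^2/\log N_j$ produces a ratio $\gtrsim 1/(cc_0^2)$, which exceeds $\ve^{-1}$ once $c_0$ is small. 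Conversely, by the area count there is a ball $B_2(x^*,r_j)$ disjoint from every coherent-state support, where the ratio is $\le\ve$. The physical-space statement at scale $r_j=c_0(\log N_j)^{-1}$ is the analogous one-dimensional count after projection to the position axis: $|\psi_j(q)|^2$ is a sum of $T_j$ bumps at the positions $q_t$ of $M^tx_j$, each of width at most $\hbar^{1/2}e^{\lambda T_j}=N_j^{c\lambda-1/2}\ll c_0/\log N_j$ for small $c$, so the same concentration/emptiness dichotomy applies in $\T^1$.

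The main obstacle is securing the orbit-empirical approximation $\mu_j\to\mu$ simultaneously with the cross-term control, both inside the Ehrenfest window $T_j\lesssim(\log N_j)/(2\lambda)$; realizing a general non-ergodic $\mu$ in particular requires carefully shadowed orbits of prescribed logarithmic length, which tightens the admissible value of $c$ and ultimately determines the non-equidistribution constant $c_0$.
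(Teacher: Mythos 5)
Your construction matches the paper's at the structural level: a quasimode built by summing propagated coherent states along a logarithmically long orbit, whose empirical measure approximates $\mu$, with the quasimode order $O(T^{-1/2})$ following from the telescoping identity and an almost-orthogonality claim. The crucial difference — and the source of a genuine gap — is that the paper builds its quasimodes on \emph{closed prime orbits} $\gamma_j$ supplied by Sigmund's theorem (Theorem~\ref{thm:Sigmund}), whereas you build on orbit segments of a $\mu$-generic (or specification-shadowed) point $x_j$. This is not a cosmetic choice. The whole analysis rests on the fact that the translated coherent states $\hat M^t\ket{x_0,\tco,\theta}$, $t=0,\dots,T-1$, have \emph{disjoint} Husimi concentration regions, so that the Husimi function of $\Psi^\gamma_\phi$ decomposes as a direct sum (Theorem~\ref{thm:PsiHusimi}). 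That disjointness requires two things: (a) the coherent-state spread along the unstable direction after $T$ steps is $\sim\hbar^{1/2}e^{\lambda T}$, and (b) the orbit points are pairwise separated by at least that much. For a prime closed orbit of length $T$, Proposition~\ref{prop:periodicpts} places the orbit on the lattice $\Lc_l$ with $l=\det(M^T-\Id)\le e^{\lambda T}$, so consecutive orbit points are separated by at least $e^{-\lambda T}$; disjointness then needs $\hbar^{1/2}e^{\lambda T}\ll e^{-\lambda T}$, i.e.\ $T\le\delta T_E$ with $\delta<\tfrac14$. Your proposal imposes only $c<1/(2\lambda)$ — i.e.\ $\delta<\tfrac12$ — which is already insufficient in the periodic case, and more seriously, for a generic (non-periodic) orbit segment there is \emph{no} separation bound at all: the orbit can recur to within distance much smaller than $\hbar^{1/2}e^{\lambda T}$, so the claim that the $\hat M^t\chi_j$ are asymptotically orthogonal, the norm estimate $C_j\sim\sqrt{T_j}$, the semiclassical-measure computation, and the ``direct sum of bumps'' picture underlying the pigeon-hole step all remain unjustified.

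Put differently, the heavy lifting in the paper is not ``find orbit segments whose empirical measures approximate $\mu$'' (that is easy, by Birkhoff or specification, exactly as you say). It is ``find such orbits with a \emph{quantitative} separation of the orbit points at scale $e^{-\lambda T}$ compatible with the coherent-state spread.'' Sigmund's theorem supplies prime closed orbits, and the arithmetic structure of periodic points of an $\SL(2,\Z)$ map then delivers the separation for free. Your appeal to the specification property is morally on the right track — it is how Sigmund's theorem is proved — but you then discard the conclusion (closed orbits) and with it the lattice structure you need. To repair the argument you would either need to invoke Sigmund's theorem as the paper does, or prove a quantitative non-recurrence bound for the chosen orbit segments, and in either case tighten the Ehrenfest-window constant to $\delta<\tfrac14$.
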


Since the Lebesgue measure is an invariant probability measure of $M$, Theorem \ref{thm:SSQE} follows directly from Theorem \ref{thm:sc}.

\begin{rmk}
Suppose that $\{\psi_j\}_{j=1}^\infty$ are normalized quasimodes of order $O((\log N)^{-1/2})$ as considered in Theorems \ref{thm:SSQE} and \ref{thm:sc}. Then it is well known that the corresponding semiclassical measures are probability measures which are invariant under $M$. (In fact, the result remains valid for quasimodes of order $o(1)$. See Zworski \cite[Chapter 5]{Zw} and also Subsection \ref{sec:qtorus} for a short proof in the context of cat maps.) 

From this point of view, Theorem \ref{thm:sc} provides a reverse statement that any invariant probability measure of $M$ must arise as a semiclassical measure that is induced by quasimodes of order $O((\log N)^{-1/2})$, moreover, these quasimodes are not equidistributed at the same logarithmical scales as in Theorem \ref{thm:SSQE}. 

We shall also point out that Theorem \ref{thm:sc} is invalid for exact eigenstates. Indeed, the set of semiclassical measures corresponding to eigenstates is smaller than the one of invariant probability measures, see \cite{B1, BonDB, FN, R}. For example, the delta measure on a closed prime orbit can not be the semiclassical measure induced by eigenstates. Therefore, Theorem \ref{thm:sc} demonstrates the sharp difference between the exact eigenstates and quasimodes of logarithmical order.
\end{rmk}

\subsection*{Outline of the plan}
The construction of the quasimodes in Theorems \ref{thm:SSQE} and \ref{thm:sc} is inspired by Faure-Nonnenmacher-De Bi\`evre \cite{FNDB}. Let $\phi\in\R$ and $\gamma=\{x_t\}_{t=0}^{T-1}$ be a closed prime orbit of the cat map $M$ on $\T^2$. Construct the quantum state $\Psi^\gamma_\phi\in\Hc_N$ by
$$\ket{\Psi^\gamma_\phi}=\sum_{t=0}^{T-1}e^{-i\phi t}\hat M^t\ket{x_0,\tco,\theta}.$$
Here, $\ket{x_0,\tco,\theta}\in\Hc_N$ is a coherent state centered at $x_0\in\T^2$ and localized in a region with width $\sim\hbar^\frac12$. (The precise localization of a quantum state is analyzed via its Husimi function. See Subsection \ref{sec:qmHusimi}.) 

Under the quantum evolution with $t>0$, $\hat M^t\ket{x_0,\tco,\theta}$ becomes less localized. Assume that $M$ has Lyapunov exponent $\lambda>0$ (so the eigenvalues of $M$ are $e^{\pm\lambda}$). Then $\hat M^t\ket{x_0,\tco,\theta}$ has center at $M^tx_0=x_t$ and localization with width $\sim\hbar^\frac12e^{\lambda t}$. We therefore introduce the Ehrenfest time
\begin{equation}\label{eq:TEhrenfest}
T_E=\frac{|\log\hbar|}{\lambda}.
\end{equation}
Thus, $\hat M^t\ket{x_0,\tco,\theta}$ remains well localized near $x_t$ if $t\le\delta T_E$ for $\delta<1/2$. From the basic property of cat maps, the points on a closed prime orbit $\{x_t\}_{t=0}^{T-1}$ are separated by distance at least $e^{-\lambda T}$ in $\T^2$. Combining these two estimates,
$$\hbar^\frac12e^{\lambda T}\ll e^{-\lambda T}\quad\text{if }T\le\delta T_E\text{ for }\delta<\frac14.$$ 
That is, within this time frame, $\hat M^t\ket{x_0,\tco,\theta}$ ($t=0,...,T-1$) are well localized in disjoint regions so the state $\Psi^\gamma_\phi$ is approximately a direct sum of them.

Suppose that $\mu$ is an invariant probability measure of $M$ on $\T^2$. Then by Sigmund \cite{S}, there is a sequence of closed prime orbits $\{\gamma_j\}_{j=1}^\infty$ such that the delta measures $\mu_{\gamma_j}\to\mu$ weakly.

If the lengths $|\gamma_j|$ of $\gamma_j$ are bounded, then $\mu$ must be itself a delta measure on some closed prime orbit. This is because the orbits of $M$ are enumerable by length. In this case, the construction of quasimodes with corresponding semiclassical measure $\mu$ was done by Faure-Nonnenmacher-De Bi\`evre \cite{FNDB}. (Indeed, the quasimodes are the localized parts of the ones constructed in \cite{FNDB}. See related liturature below for the difference between the approach in \cite{FNDB} and the one in this paper.) These quasimodes are not quantum ergodic at the macroscopic scale therefore satisfy the non-equidistribution conditions as in (i) and (ii) of Theorem \ref{thm:SSQE} at any small scale.

In this paper, we discuss the case when the lengths $|\gamma_j|\to\infty$. Fix $0<\delta<1/4$. Assign $\hbar_j\sim e^{\lambda|\gamma_j|/\delta}$ so $|\gamma_j|\sim\delta|\log\hbar_j|/\lambda$. Construct the quantum state $\Psi_{\phi_j}^{\gamma_j}\in\Hc_{N_j}$ as above. For notational simplicity, we drop the subscription for now. We have seen that $\Psi^{\gamma}_{\phi}$ localizes near $\gamma=\{x_t\}_{t=0}^{T-1}$, on which the points are also well separated. It then follows immediately that $\|\Psi^{\gamma}_{\phi}\|_{L^2}\sim\sqrt{T}$. Observe that
\begin{equation}\label{eq:hatMphi}
\left\|\left(\hat M-e^{i\phi}\hat I\right)\sum_{t=0}^{T-1}e^{-i\phi t}\hat M^t\right\|=\left\|e^{-i\phi T}\hat M^T-\hat I\right\|\le2.
\end{equation}
The states $\Psi^{\gamma}_{\phi}$ therefore are quasimodes of order $O(1/\sqrt{T})=O(|\log\hbar|^{-1/2})$. In addition, $\Psi^{\gamma}_{\phi}$ localizes near the classical orbit $\gamma$, on which the delta measure tends to $\mu$ as $T\to\infty$. Then the semiclassical measure induced by $\Psi^{\gamma}_{\phi}$ is also $\mu$. The rigorous analysis requires the detailed study of their Husimi functions.

To observe the non-equidistribution phenomenon, we first note that a closed prime orbit $\gamma$ of length $T$ in $\T^2$ can not be equidistributed at any scale $r$ if $r\ll T^{-1/2}\sim|\log\hbar|^{-1/2}$. Indeed, one can find $\sim r^{-2}\gg T$ disjoint balls in $\T^2$. From the pigeon-hole principle, there are balls that do not intersect $\gamma$. This readily shows the non-equidistribution of $\gamma$ in $\T^2$. Since the quasimode $\Psi^{\gamma}_{\phi}$ is localized near the orbit $\gamma$, it must also display non-equidistribution at the same scale in $\T^2$. The analysis here again needs the study of their Husimi functions. The non-equidistribution of $\Psi^{\gamma}_{\phi}$ at scales $r\ll T^{-1}\sim|\log\hbar|^{-1}$ in the physical space $\T^1$ can be argued similarly.

\subsection*{Related literature}
The cat maps are the simplest examples of hyperbolic dynamical systems. We expect that the \textit{Quantum Chaos} study in this series would motivate a more general approach for other hyperbolic systems, such as the geodesic flow on compact manifolds with negative curvature. The eigenstates in the corresponding quantum system can be described by Laplacian eigenfunctions on the manifold. The density equidistribution of eigenfunctions as well as quasimodes have been extensively studied. See Part I \cite{Han} for the discussion on these results.

The study of non-equidistribution of quasimodes of logarithmical order have been studied by Brooks \cite{B2} on surfaces of constant curvature, Eswarathasan-Nonnenmacher \cite{EN} on surfaces of variable curvature, and Eswarathasan-Silberman \cite{ES} on higher dimensional manifolds of constant curvature. In these various settings, they construct quasimodes that concentrate near a fixed closed orbit and therefore fail equidistribution at the macroscopic scale. These arguments are similar in spirit to Faure-Nonnenmacher-De Bi\`evre \cite{FNDB} for cat maps. 

In our construction however, there are a family of closed orbits with lengths tending infinity. The quasimodes are associated with this family, instead of one fixed orbit. When the delta measures on the orbits of this family tend to a measure, the quasimodes are then designed to recover the same measure in the semiclassical limit. It is interesting to see if such construction can also be carried out on manifolds.
 
\subsection*{Organization of the paper}
In Sections \ref{sec:plane} and \ref{sec:torus}, we recall the necessary tools of classical and quantum dynamics to construct the quasimodes \eqref{eq:qmT}. In Section \ref{sec:qm}, we describe the Husimi functions of these quasimodes and prove the properties in Theorem \ref{thm:sc}.

\section{Classical dynamics and quantum dynamics on the plane}\label{sec:plane}
In this section, we introduce the classical linear hyperbolic systems on the phase space $\R^2$ and their quantum systems. We follow the setup in Faure-Nonnenmacher-De Bi\`evre \cite{FNDB}. In particular, we mention several interpretations of the classical system, define the corresponding quantum system, and analyze the quantum evolution of coherent states.
\subsection{Classical dynamics on the plane}
Consider the quadratic Hamiltonian on the phase space $\R^2$ that is given by
\begin{equation}\label{eq:Ham}
H(q,p)=\frac12\alpha q^2+\frac12\beta p^2+\gamma qp.
\end{equation}
It generates the Hamiltonian flow $M(t):x(0)=(q(0),p(0))\to x(t)=(q(t),p(t))$, in which
$$M(t)=\exp\left\{t\begin{pmatrix}
\gamma & \beta\\
-\alpha & -\gamma
\end{pmatrix}\right\}.$$
Define
\begin{equation}\label{eq:M}
M=M(1)=\exp\left\{\begin{pmatrix}
\gamma & \beta\\
-\alpha & -\gamma
\end{pmatrix}\right\}=\begin{pmatrix}
A & B\\
C & D
\end{pmatrix}\in\SL(2,\R).
\end{equation}
Set $\lambda=\sqrt{\gamma^2-\alpha\beta}$. Then
$$\begin{cases}
A=\cosh\lambda+\frac{\gamma}{\lambda}\sinh\lambda,\quad B=\frac\beta\lambda\sinh\lambda,\\
C=-\frac\alpha\lambda\sinh\lambda,\quad D=\cosh\lambda-\frac\gamma\lambda\sinh\lambda.
\end{cases}$$
\begin{defn}[Classical linear dynamical systems on $\R^2$]\hfill
\begin{itemize}
\item If $\gamma^2>\alpha\beta$, then $M(t)$ is a hyperbolic flow with Lyapunov exponent $\lambda=\sqrt{\gamma^2-\alpha\beta}$ and $M$ is a hyperbolic map with eigenvalues $e^{\pm\lambda}$ and two eigenaxes that correspond to the unstable and stable directions for the dynamics. They have slopes $s_+=\tan\psi_+$ and $s_-=\tan\psi_-$.
\item If $\gamma^2<\alpha\beta$, then $M$ is an elliptic map.
\end{itemize}
\end{defn}
 
Any hyperbolic map $M\in\SL(2,\R)$ with $\Tr M>2$ is of the above form. (If $\Tr M<-2$, then consider the map $-M$.) Throughout the paper, we use $M$ to denote both the map and the matrix that defines it.

\begin{rmk}
We remark that $M\in\SL(2,\R)$ preserves the symplectic product on $\R^2$:
$$Mu\wedge Mv=u\wedge v.$$
Here,
$$u\wedge v=u_2v_1-u_1v_2\quad\text{for }u=(u_1,u_2)\text{ and }v=(v_1,v_2)\in\R^2.$$
\end{rmk}

We now rewrite the hyperbolic flow $M(t)$ and the hyperbolic map $M$ in complex coordinates. Let $z=(q+ip)/\sqrt2$. Then the Hamiltonian in \eqref{eq:Ham} is
$$H(z,\ol z)=\frac c2z^2+\frac{\ol c}{2}\ol z^2+bz\ol z,\quad\text{in which }b=\frac{\alpha+\beta}2\in\R\text{ and }c=\frac{\alpha-\beta}{2}-i\gamma\in\C.$$
Since $\gamma^2-\alpha\beta=|c|^2-b^2$, $M=M_{(c,b)}$ is hyperbolic if $|c|^2>b^2$ and is elliptic if $|c|^2<b^2$. 

\begin{defn}
Let $\mu\in(0,\infty)$. 
\begin{itemize}
\item Define
$$D(\mu)=M_{(c=-i\mu,b=0)}=\begin{pmatrix}
e^{\mu} & 0\\
0 & e^{-\mu}
\end{pmatrix},$$ 
which is hyperbolic with $q$-axis and $p$-axis as the unstable and stable axes, respectively.
\item Define
$$B(\mu)=M_{(c=-\mu,b=0)}=\begin{pmatrix}
\cosh\mu & \sinh\mu\\
\sinh\mu & \cosh\mu
\end{pmatrix},$$
which is hyperbolic with the unstable and stable axes forming $\psi_+=\pi/4$ and $\psi_-=-\pi/4$ with the $q$-axis.
\item Define
$$R(\mu)=M_{(c=0,b=-\mu)}=\begin{pmatrix}
\cos\mu & -\sin\mu\\
\sin\mu & \cos\mu
\end{pmatrix},$$ 
which is a rotation of angle $\mu$ and is therefore elliptic.
\end{itemize}
\end{defn}

Then any hyperbolic map $M_{(c,b)}$ can be decomposed as follows: There are $b_1\in[\frac\pi2,\frac\pi2]$ and $b_2\in\R$ such that
\begin{equation}\label{eq:Mdecomp}
M_{(c,b)}=QD(\lambda)Q^{-1},\quad\text{in which }Q=R(b_1)B(b_2).
\end{equation}
That is, $M_{(c,b)}$ is obtained from the special case $D(\lambda)$ ($\lambda=\sqrt{|c|^2-b^2}>0$) by a change of coordinates. Notice that $D(\lambda)$ has unstable and stable axes given by vectors $e_q$ and $e_p$ in the $q$-axis and $p$-axis, respectively. The map $Q$ transforms from this $(q,p)$-frame to the unstable/stable-frame given by vectors $v_+=Qe_q$ and $v_-=Qe_p$.

\subsection{Quantum dynamics on the plane}
Let $h$ be the Planck constant and we are interested in the semiclassical limit as $h\to0$ in this paper. Denote $\hbar=h/(2\pi)$. The states in the quantum system are represented by functions in $L^2(\R)$; the quantum observables are operators acting on $L^2(\R)$ which are quantization of the classical observables in $C^\infty(\R^2)$.

We first define the quantization of the position and momentum observables as the self-adjoint operators
$$\hat q\psi(q)=q\psi(q)\quad\text{and}\quad\hat p\psi=\frac\hbar i\frac{d\psi(q)}{dq}\quad\text{for }\psi\in C^\infty_0(\R).$$ 
So we have that
$$[\hat q,\hat p]=\hat q\hat p-\hat p\hat q=i\hbar\hat I.$$
Here, $\hat I$ is the identity map that $\hat I\psi=\psi$. 

The Weyl quantization of the Hamiltonian in \eqref{eq:Ham} is the self-adjoint operator
$$\hat H=\frac12\alpha\hat q^2+\frac12\beta\hat p^2+\frac\gamma2(\hat q\hat p+\hat p\hat q).$$
It generates the Schr\"odinger flow $\psi(0)\to\psi(t)$ such that
$$\psi(t)=e^{-it\hat H/\hbar}\psi(0)$$
solves the Schr\"odinger equation
$$i\hbar\frac{\partial\psi(t)}{\partial t}=\hat H\psi(t).$$
\begin{defn}[Quantum maps]
The quantum map (or quantum evolution operator) corresponding to a hyperbolic map $M$ is defined as $\hat M=e^{-i\hat H/\hbar}$.
\end{defn}

\begin{defn}[Quantum translation operators]
Let $v=(v_1,v_2)\in\R^2$ and the translation $T_v(x)=x+v$ for $x\in\R^2$. Define the quantum translation operator as
$$\hat T_v=\exp\left(-\frac{i}{\hbar}(v_1\hat p-v_2\hat q)\right).$$
\end{defn}

\begin{prop}\label{prop:TR}
Let $u=(u_1,u_2)$ and $v=(v_1,v_2)$ in $\R^2$. Then 
\begin{enumerate}[(1).]
\item the adjoint operator 
$$\hat T_v^\star=\hat T_{-v},$$
\item the conjugation 
\begin{equation}\label{eq:MTM}
\hat M\hat T_v\hat M^{-1}=\hat T_{Mv},
\end{equation}
\item the composition 
\begin{equation}\label{eq:TuTv}
\hat T_u\hat T_v=e^{\frac{iu\wedge v}{2\hbar}}\hat T_{u+v}.
\end{equation}
\end{enumerate}
\end{prop}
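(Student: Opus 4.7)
The plan is to handle the three identities separately, each by a direct calculation in the Heisenberg algebra generated by $\hat q$ and $\hat p$ together with the exact Egorov property of the metaplectic flow $\hat M(t)$. For (1), the generator $v_1\hat p - v_2\hat q$ is essentially self-adjoint on the Schwartz class for real $v_1,v_2$, so Stone's theorem gives that $\hat T_v$ is unitary; its adjoint is its inverse, obtained by reversing the sign of the generator, which is precisely $\hat T_{-v}$.

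For (3), I would write $\hat T_u = e^{-iA/\hbar}$ and $\hat T_v = e^{-iB/\hbar}$ with $A = u_1\hat p - u_2\hat q$ and $B = v_1\hat p - v_2\hat q$, and use the canonical commutation relation $[\hat q,\hat p]=i\hbar\hat I$ to compute
\[
[A,B] = i\hbar(u_1 v_2 - u_2 v_1)\hat I = -i\hbar(u\wedge v)\hat I.
\]
Since this commutator is a scalar multiple of $\hat I$, it is central, and the Baker--Campbell--Hausdorff formula truncates at the first commutator:
\[
\hat T_u\hat T_v = \exp\!\Bigl(-\tfrac{i}{\hbar}(A+B) + \tfrac{1}{2}\bigl[-\tfrac{i}{\hbar}A,\,-\tfrac{i}{\hbar}B\bigr]\Bigr) = e^{i(u\wedge v)/(2\hbar)}\,\hat T_{u+v}.
\]

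For (2), I would use the exact Heisenberg evolution of the position and momentum operators under $\hat M$. Because $\hat H$ is quadratic, $[\hat H,\hat q]$ and $[\hat H,\hat p]$ are linear in $\hat q,\hat p$, and the resulting operator Heisenberg equations
\[
\dot{\hat q}(t) = \frac{i}{\hbar}[\hat H,\hat q(t)] = \gamma\hat q(t) + \beta\hat p(t), \qquad \dot{\hat p}(t) = \frac{i}{\hbar}[\hat H,\hat p(t)] = -\alpha\hat q(t) - \gamma\hat p(t)
\]
are formally identical to the classical linear system generated by the matrix in \eqref{eq:M}. Integrating to $t=1$ yields the exact Egorov identities $\hat M^{-1}\hat q\hat M = A\hat q + B\hat p$ and $\hat M^{-1}\hat p\hat M = C\hat q + D\hat p$. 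Substituting into the generator of $\hat T_v$ and using $M^{-1} = \begin{pmatrix} D & -B \\ -C & A \end{pmatrix}$, one obtains
\[
\hat M^{-1}(v_1\hat p - v_2\hat q)\hat M = (M^{-1}v)_1\hat p - (M^{-1}v)_2\hat q,
\]
so exponentiation gives $\hat M^{-1}\hat T_v\hat M = \hat T_{M^{-1}v}$, equivalent to \eqref{eq:MTM} after replacing $v$ by $Mv$.

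The main obstacle is justifying the exact Egorov step in (2): although the vanishing of the semiclassical Egorov remainder is automatic for quadratic $\hat H$, one must choose a common dense invariant domain (typically the Schwartz class, which is preserved by $\hat M(t)$) on which the Heisenberg equations make classical sense, and then promote the resulting operator identities from that domain to all of $L^2(\R)$ using the unitarity of $\hat M$. Once this is in hand, the rest is the purely algebraic passage from the generators to their exponentials.
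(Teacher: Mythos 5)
The paper does not actually give a proof of Proposition~\ref{prop:TR}; it is stated as a standard fact, with the setup attributed to Faure--Nonnenmacher--De~Bi\`evre \cite{FNDB}. So there is no paper proof to compare against. That said, your argument is correct and is the canonical derivation of these identities. For (1), unitarity via Stone's theorem applied to the essentially self-adjoint generator $v_1\hat p - v_2\hat q$, followed by $\hat T_v^\star = \hat T_v^{-1} = \hat T_{-v}$, is exactly right. For (3), your commutator computation $[A,B] = i\hbar(u_1v_2 - u_2v_1)\hat I = -i\hbar(u\wedge v)\hat I$ uses the paper's sign convention $u\wedge v = u_2v_1 - u_1v_2$ consistently, the commutator is central so BCH truncates, and the resulting phase $e^{i(u\wedge v)/(2\hbar)}$ matches \eqref{eq:TuTv}. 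For (2), the Heisenberg equations for the quadratic $\hat H$ reproduce the linear classical flow exactly, giving $\hat M^{-1}\hat q\hat M = A\hat q + B\hat p$ and $\hat M^{-1}\hat p\hat M = C\hat q + D\hat p$; substituting into the generator and using $M^{-1} = \begin{pmatrix} D & -B \\ -C & A\end{pmatrix}$ (valid since $\det M = 1$) yields $\hat M^{-1}(v_1\hat p - v_2\hat q)\hat M = (M^{-1}v)_1\hat p - (M^{-1}v)_2\hat q$, which exponentiates to \eqref{eq:MTM}. Your closing remark about fixing a common invariant domain (the Schwartz class) and then extending by density and unitarity is the right way to make the formal exponential manipulations rigorous, and is the one genuine technical point that a careless write-up might omit.
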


\subsection{Coherent states and their evolution on the plane}\label{sec:coR}
The standard coherent state $\ket0$ at the origin is the ground state of the quantum harmonic oscillator $\hat q^2+\hat p^2$. The standard coherent state at $x=(q,p)\in\R^2$ is then $\ket x=\hat T_x\ket0$. In the $L^2(\R)$ representation, $\ket x$ is a Gaussian wave packet that is given by
$$\ket x(q')=\frac{1}{(\pi\hbar)^\frac14}e^{\frac i\hbar pq'}e^{-\frac{1}{2\hbar}|q'-q|^2},$$
which is localized at $x=(q,p)$ with width $\sim\sqrt\hbar$.

Let $M$ be a hyperbolic map on $\R^2$ and $\hat M$ be its quantum map. The evolution of the standard coherent state $\ket x$ under $\hat M$ is not straightforward. This is partly due to the fact that the unstable/stable-frame of $M$ is in general different from the $(q,p)$-frame. To remedy this issue, we introduce the squeezed coherent states. 
\begin{defn}[Squeezed coherent states]
Let $\tc\in\C$. Define the squeezed coherent states 
$$\ket\tc=\hat M_{(\tc,0)}\ket0\quad\text{and}\quad\ket{x,\tc}=\hat T_x\ket\tc.$$
\end{defn}

We use the notations $\ket\tc$ and $\ket{x,\tc}$ with tildes to indicate the squeezed coherent states. In particular, choosing $\tilde c=0$ reduces $\ket\tc$ and $\ket{x,\tc}$ to the standard coherent states $\ket0$ and $\ket x$. The properties of the coherent states $\ket\tc$ are analyzed via the Bargmann and Husimi functions.
\begin{defn}[Bargmann and Husimi functions on the plane]
Let $\tilde w\in\C$ and $\psi\in L^2(\R)$. Define
\begin{itemize}
\item the Bargmann function of $\psi$ as
$$\Bc_{\tilde w,\psi}(x)=\braket{x,\tilde w|\psi},$$

\item the Husimi function of $\psi$ as
\begin{equation}\label{eq:Husimi}
\Hc_{\tilde w,\psi}(x)=\frac{\left|\braket{x,\tilde w|\psi}\right|^2}{2\pi\hbar},\quad\text{which satisfies }\frac{1}{2\pi\hbar}\int_{\R^2}\ket{x,\tilde w}\bra{x,\tilde w}\,dx=\hat I.
\end{equation}
\end{itemize}
\end{defn}

That is, taking the inner product of $\psi$ with the coherent state $\ket{x,\tilde w}$, $\Bc_{\tilde w,\psi}(x)$ and $\Hc_{\tilde w,\psi}(x)$ measure the localization of the state $\psi$ at the point $x$ in the phase space $\R^2$. We use the squeezed coherent states $\ket{x,\tilde w}=\hat T_x\ket{\tilde w}$ here to allow full generality so $\Bc_{\tilde w,\psi}(x)$ and $\Hc_{\tc,\psi}(x)$ depend on $\tilde w$. The Bargmann and Husimi functions of various states $\psi$ can be simplified by making appropriate choices of $\tilde w$ and the frame in $\R^2$.

\begin{ex}[Bargmann function of the squeezed coherent states]
Let $\ket\tc=\hat M_{(\tc,0)}\ket0$ be a squeezed coherent state. Choose $\tilde w=0$ and the unstable/stable-frame $x=(\tilde q,\tilde p)$ of $M_{(\tc,0)}$. Then we have that
$$\Bc_{0,\tc}(x)=\braket{x,0|\tc}=\frac{1}{\sqrt{\cosh|\tc|}}\exp\left(-\frac{i\tanh|\tc|}{2\hbar}\tilde q\tilde p\right)\exp\left(-\frac12\left(\frac{\tilde q^2}{\Delta\tilde q^2}+\frac{\tilde p^2}{\Delta\tilde p^2}\right)\right),$$
in which
$$\Delta\tilde q^2=\frac{2\hbar}{1-\tanh|\tc|}\quad\text{and}\quad\Delta\tilde p^2=\frac{2\hbar}{1+\tanh|\tc|}.$$
In particular, if $\tc=-i\mu$ for $\mu>0$, then $M_{(\tc,0)}=D(\mu)$ and the unstable/stable-frame coincides with the $(q,p)$-frame. In this case, we have that
\begin{equation}\label{eq:cBargmannmu}
\braket{x,0|\tc}=\braket{x|\hat D(\mu)|0}=\frac{1}{\sqrt{\cosh\mu}}\exp\left(-\frac{i\tanh\mu}{2\hbar}qp\right)\exp\left(-\frac12\left(\frac{q^2}{\Delta q^2}+\frac{p^2}{\Delta p^2}\right)\right).
\end{equation}
\end{ex}

We next describe the quantum evolution of coherent states under $\hat M=\hat M_{(c,b)}$. Based on the decomposition of $M=M_{(c,b)}$ in \eqref{eq:Mdecomp}, the evolution can be described explicitly for properly chosen squeezed coherent states. Recall that $M_{(c,b)}=QD(\lambda)Q^{-1}$, in which $Q=R(b_1)B(b_2)$ with $b_1\in[\frac\pi2,\frac\pi2]$ and $b_2\in\R$. Put
\begin{equation}\label{eq:co}
\tco=-b_2e^{-2ib_1}.
\end{equation}
Since $M_{(\tco,0)}=R(b_1)B(b_2)R(-b_1)$, $\hat M_{(\tco,0)}=\hat R(b_1)\hat B(b_2)\hat R(-b_1)=\hat Q\hat R(-b_1)$, which implies that
$$\ket\tco=M_{(\tco,0)}\ket0=\hat Q\hat R(-b_1)\ket0=e^{-\frac{ib_1}{2}}\hat Q\ket0.$$
It thus follows from \eqref{eq:cBargmannmu} that
$$\hat M^t\ket\tco=e^{-\frac{ib_1}{2}}\hat Q\hat D(\lambda t)\ket0\quad\text{and}\quad\braket{\tco|\hat M^t|\tco}=\braket{0|\hat D(\lambda t)|0}=\frac{1}{\sqrt{\cosh(\lambda t)}}.$$
Denote the quantum evolution of the coherent state $\ket\tco$ in (1) by
\begin{equation}\label{eq:tco}
\ket{t;\tco}=\hat M^t\ket\tco.
\end{equation}
Then the Husimi function \eqref{eq:Husimi} of $\ket{t;\tco}$ is explicit by choosing $\tilde w=\tco$ and the unstable/stable frame of $M$. 
\begin{prop}\label{prop:Husimiplane}
Let $x=(q',p')=Q^{-1}(q,p)$. Then for $t\ge0$,
$$\Hc_{\tco,t}(x)=\frac{\left|\braket{x,\tco|t;\tco}\right|^2}{2\pi\hbar}=\frac{1}{2\pi\hbar\cosh(\lambda t)}\exp\left(-\left(\frac{q'^2}{\Delta q'^2}+\frac{p'^2}{\Delta p'^2}\right)\right),$$
in which
$$\begin{cases}
\Delta q'^2=\frac{2\hbar}{1-\tanh(\lambda t)}\sim\hbar e^{2\lambda t} & \text{as }t\to\infty,\\
\Delta p'^2=\frac{2\hbar}{1+\tanh(\lambda t)}=e^{-2\lambda t}\Delta q'^2\to\hbar & \text{as }t\to\infty.
\end{cases}$$
\end{prop}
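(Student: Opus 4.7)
The plan is to reduce the inner product $\braket{x,\tco|t;\tco}$ to the already-computed Bargmann function \eqref{eq:cBargmannmu} of $\hat D(\lambda t)\ket 0$ via the decomposition $M=QD(\lambda)Q^{-1}$. The calculation is essentially algebraic once the squeezing is aligned with the hyperbolic frame of $M$; the point of the proposition is that the choice $\tco=-b_2e^{-2ib_1}$ in \eqref{eq:co} is exactly the one that diagonalizes the Husimi function in the unstable/stable frame.

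First I would rewrite both vectors appearing in the inner product. For the evolved state, I will use the identity displayed in the text just before the proposition,
\[
\ket{t;\tco}=\hat M^t\ket\tco=e^{-ib_1/2}\hat Q\hat D(\lambda t)\ket 0,
\]
which follows from $M^t=QD(\lambda t)Q^{-1}$ and $\ket\tco=e^{-ib_1/2}\hat Q\ket 0$. For the test coherent state, I apply the conjugation rule \eqref{eq:MTM} to $Q$ (extended to $\SL(2,\R)$ in the obvious way), getting $\hat Q^{-1}\hat T_x\hat Q=\hat T_{Q^{-1}x}$, and therefore
\[
\ket{x,\tco}=\hat T_x\ket\tco=e^{-ib_1/2}\hat Q\,\hat T_{Q^{-1}x}\ket 0=e^{-ib_1/2}\hat Q\ket{Q^{-1}x}.
\]

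Next I would take the inner product. The two phases $e^{-ib_1/2}$ (with $b_1\in\R$) cancel as a unit modulus factor, and the unitarity of $\hat Q$ eliminates the change of frame in the middle:
\[
\braket{x,\tco|t;\tco}=\braket{Q^{-1}x\,|\,\hat D(\lambda t)\,|\,0}.
\]
Writing $Q^{-1}x=(q',p')$, the right-hand side is exactly the Bargmann function \eqref{eq:cBargmannmu} with $\mu=\lambda t$, so
\[
\braket{x,\tco|t;\tco}=\frac{1}{\sqrt{\cosh(\lambda t)}}\exp\!\left(-\frac{i\tanh(\lambda t)}{2\hbar}q'p'\right)\exp\!\left(-\frac12\!\left(\frac{q'^2}{\Delta q'^2}+\frac{p'^2}{\Delta p'^2}\right)\right).
\]
The pure phase term drops out when I take $|\cdot|^2$, and dividing by $2\pi\hbar$ per the definition \eqref{eq:Husimi} yields the claimed formula for $\Hc_{\tco,t}(x)$.

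Finally, the asymptotics of $\Delta q'^2$ and $\Delta p'^2$ as $t\to\infty$ follow from $\tanh(\lambda t)=1-2e^{-2\lambda t}+O(e^{-4\lambda t})$, giving $\Delta q'^2\sim \hbar e^{2\lambda t}$ in the unstable direction, while $1+\tanh(\lambda t)\to 2$ gives $\Delta p'^2\to\hbar$ in the stable direction; the factor $e^{-2\lambda t}$ relating the two is immediate from the ratio of the two denominators. There is no real obstacle here — the entire argument is bookkeeping around the decomposition \eqref{eq:Mdecomp}; the only thing one has to be careful about is tracking the unit-modulus prefactor $e^{-ib_1/2}$ and verifying that the conjugation identity $\hat Q^{-1}\hat T_x\hat Q=\hat T_{Q^{-1}x}$ indeed applies to the full $Q\in\SL(2,\R)$, not only to hyperbolic maps, which it does because \eqref{eq:MTM} only used symplecticity.
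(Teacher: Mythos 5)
Your argument is correct and is essentially the proof the paper intends, merely making explicit one step the paper leaves implicit: the commutation $\hat T_x\hat Q=\hat Q\hat T_{Q^{-1}x}$ (from \eqref{eq:MTM} applied to the general metaplectic operator $\hat Q$) which converts the squeezed test state $\ket{x,\tco}$ into $e^{-ib_1/2}\hat Q\ket{Q^{-1}x}$, after which the unitarity of $\hat Q$ and the cancellation of the $e^{\pm ib_1/2}$ phases reduce the matrix element to $\braket{Q^{-1}x|\hat D(\lambda t)|0}$, i.e., the diagonal Bargmann function \eqref{eq:cBargmannmu} evaluated in the unstable/stable frame. Your remark that \eqref{eq:MTM} applies to all of $\SL(2,\R)$ (not only hyperbolic maps) is correct and is indeed the point one should verify; the identity is exact $\Delta p'^2=e^{-2\lambda t}\Delta q'^2$ (not merely asymptotic), which your computation confirms.
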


\begin{rmk}\hfill
\begin{itemize}
\item The Husimi function $\Hc_{\tco,t}(x)$ of $\ket{t;\tco}$ spreads in the unstable direction of the map $M$ by a rate of $\sqrt\hbar e^{\lambda t}$ while stays in the $\sqrt\hbar$ neighborhood of the stable direction.
\item $\Hc_{\tco,t}(x)$ is concentrated in the elliptic region around the origin with two axes $\Delta q'$ and $\Delta p'$ (in the unstable/stable-frame of $M$).
\item The concentration region of $\Hc_{\tco,t}(x)$ has area $\sim\Delta q'\Delta p'\sim\hbar e^{\lambda t}$. Hence, $\Hc_{\tco,t}(x)\sim\sqrt\hbar e^{\lambda t/2}$ in this region due to conservation of the $L^2$ norm in \eqref{eq:Husimi}.
\item The Husimi function of the evolution $\ket{t;\tco}$ in negative times $t<0$ can be described similarly as above. In this case, the concentration region of $\Hc_{\tco,t}(x)$ spreads in the stable direction of the map $M$ by a rate of $\sqrt\hbar e^{\lambda t}$ while stays in the $\sqrt\hbar$ neighborhood of the unstable direction.
\item Because of the explicit Husimi function of the evolution $\ket{t;\tco}$, we exclusively use the squeezed coherent states $\ket\tco$ (which depends on $M$) in our construction of quasimodes. See Subsection \ref{sec:coT}.
\end{itemize}
\end{rmk}

\section{Classical dynamics and quantum dynamics on the torus}\label{sec:torus}
In this section, we introduce the classical linear hyperbolic systems on the phase space $\T^2=\R^2/\Z^2$ and their quantum systems, which are referred as classical and quantum cat maps, respectively. We again follow the setup in Faure-Nonnenmacher-De Bi\`evre \cite{FNDB}. 

\subsection{Classical dynamics on the torus}
\begin{defn}[Classical cat maps]
Let $M\in\SL(2,\R):\R^2\to\R^2$ be a hyperbolic map. Suppose further that $M\in\SL(2,\Z)$, i.e., $A,B,C,D\in\Z$ in \eqref{eq:M}. Since
$$M(x+n)=Mx+Mn=Mx\mod1\quad\text{for }x\in\R^2\text{ and }n\in\Z^2,$$
$M$ induces a map on $\T^2$ that is hyperbolic, by which we refer as a classical cat map. 
\end{defn}

\begin{ex}[Arnold cat map]
The Arnold cat map is defined by
$$M_{\mathrm{Arnold}}=\begin{pmatrix}
2 & 1\\
1 & 1
\end{pmatrix}.$$
The eigenvalues of $M_{\mathrm{Arnold}}$ are $(3\pm\sqrt5)/2$ with the Lyapunov exponent $\log((3+\sqrt5)/2)$.
\end{ex}

Consider the classical cat map $M$ with Lyapunov exponent $\lambda>0$ so the eigenvalues are $e^{\pm\lambda}$. We recall some standard facts about the discrete hyperbolic dynamical system $M^t:\T^2\to\T^2$ ($t\in\Z$) that are useful later. See Katok-Hasselblatt \cite{KH} for more details.

\begin{defn}[Periodic points and closed orbits]\hfill
\begin{itemize}
\item We say that $x\in\T^2$ is periodic if $x$ is a fixed point of $M^t$ for some $t\ge1$, i.e., $M^tx=x$. Then $\gamma=\{M^sx\}_{s=0}^{t-1}$ form a closed orbit of $M$ and we denote the length of $\gamma$ by $|\gamma|=t$.
\item The period of a periodic point $x\in\T^2$ is defined as 
$$p(x)=\min\left\{T:T\ge1\text{ and }M^Tx=x\right\}.$$ 
Then $\gamma=\{M^tx\}_{t=0}^{p(x)-1}$ is called a prime closed orbit of $M$. In this case, $p(x)=|\gamma|$ for all $x\in\gamma$.
\end{itemize}
\end{defn}

Denote $\Fc=[-\frac12,\frac12)\times[-\frac12,\frac12)$ as a fundamental domain of $\T^2$. Then each periodic point $x=(q,p)\in\Fc$ satisfies that
$$M^tx=M^t\begin{pmatrix}
q\\
p
\end{pmatrix}=\begin{pmatrix}
q\\
p
\end{pmatrix}+\begin{pmatrix}
j\\
k
\end{pmatrix}\quad\text{for some }j,k\in\Z.$$
Since $M^t\in\SL(2,\Z)$, 
$$\begin{pmatrix}
q\\
p
\end{pmatrix}=(M^t-\Id)^{-1}\begin{pmatrix}
j\\
k
\end{pmatrix}$$
has rational coordinates. Here, $\Id$ is the identity matrix. Moreover, if $x\in\Fc$ is periodic with period $p(x)$, then its coordinates can be written as rational numbers with denominator that is not larger than $\det(M^{p(x)}-\Id)$. To summarize,

\begin{prop}\label{prop:periodicpts}\hfill
\begin{enumerate}[(i).]
\item Let $x\in\Fc$ be a periodic point with period $p(x)$. Then 
$$x\in\Lc_l,\quad\text{in which }l=\det\left(M^{p(x)}-\Id\right)=e^{\lambda p(x)}+e^{-\lambda p(x)}-2.$$
Here,
$$\Lc_l=\left\{\left(\frac jl,\frac kl\right):-\frac l2\le j,k<\frac l2\right\}\subset\Fc$$ 
is the lattice of rational points with denominator $l\in\N$ (which are not necessarily in the simplest form).
\item Let $x\in\Lc_l$. Then $x$ is periodic with period $p(x)\le l^2$.
\end{enumerate}
\end{prop}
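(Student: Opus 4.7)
The plan is to handle the two parts separately, each via an elementary algebraic observation combined with a standard counting argument.

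For part (i), I would start from the defining equation of a periodic point on the torus: if $x=(q,p)\in\Fc$ has period $T=p(x)$, then $M^Tx\equiv x\pmod{\Z^2}$, i.e.\ $(M^T-\Id)x=n$ for some $n\in\Z^2$. Since the eigenvalues of $M^T$ are $e^{\pm\lambda T}$ with $\lambda T>0$, the matrix $M^T-\Id$ has nonzero eigenvalues $e^{\pm\lambda T}-1$ and is therefore invertible, so $x=(M^T-\Id)^{-1}n$. Now I would invoke Cramer's rule: the adjugate of $M^T-\Id$ has integer entries (because $M^T-\Id$ does), so $(M^T-\Id)^{-1}=\det(M^T-\Id)^{-1}\cdot(\text{integer matrix})$. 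Writing $l=|\det(M^T-\Id)|$, this means $lx$ has integer coordinates; combined with $x\in\Fc=[-\tfrac12,\tfrac12)^2$, this yields $x\in\Lc_l$. The explicit formula then falls out by multiplying the two eigenvalues of $M^T-\Id$: $\det(M^T-\Id)=(e^{\lambda T}-1)(e^{-\lambda T}-1)=2-e^{\lambda T}-e^{-\lambda T}$, whose absolute value is $e^{\lambda T}+e^{-\lambda T}-2$.

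For part (ii), the plan is to observe that $M\in\SL(2,\Z)$ implies $M^{-1}\in\SL(2,\Z)$ as well, so both have integer entries. Hence for $x=(j/l,k/l)\in\Lc_l$ the coordinates of $Mx$ remain in $\tfrac1l\Z$, and reducing modulo $\Z^2$ puts $Mx$ back into $\Lc_l$; the same applies to $M^{-1}$, so $M$ restricts to a bijection $\Lc_l\to\Lc_l$. A direct count gives $|\Lc_l|=l^2$ (for either parity of $l$, the integers $j$ with $-l/2\le j<l/2$ number exactly $l$). Applying the pigeonhole principle to $\{x,Mx,\ldots,M^{l^2}x\}\subset\Lc_l$, two iterates must coincide, say $M^ix=M^jx$ with $0\le i<j\le l^2$; invertibility then gives $x=M^{j-i}x$, so $p(x)\le j-i\le l^2$.

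I do not anticipate a serious obstacle: the entire argument is linear-algebraic once the correct framework is in place. The only delicate bookkeeping is the sign convention in the formula for $l$ (since $\det(M^T-\Id)$ is negative when $\lambda T>0$, the stated identity should be interpreted with an absolute value) and the count $|\Lc_l|=l^2$, both of which are mechanical.
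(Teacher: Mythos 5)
Your proposal is correct and follows essentially the same route as the paper's discussion preceding the proposition: for (i), invertibility of $M^{p(x)}-\Id$ together with integrality of its entries (Cramer's rule) forces $x$ to have rational coordinates with denominator $|\det(M^{p(x)}-\Id)|$, and for (ii), the pigeonhole argument on the $M$-invariant finite set $\Lc_l$ of cardinality $l^2$ gives the period bound. You also correctly flagged the sign slip in the paper's statement: since $\det(M^T-\Id)=(e^{\lambda T}-1)(e^{-\lambda T}-1)=2-e^{\lambda T}-e^{-\lambda T}<0$ for $\lambda T>0$, the displayed identity should read $l=\bigl|\det(M^{p(x)}-\Id)\bigr|=e^{\lambda p(x)}+e^{-\lambda p(x)}-2$.
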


\begin{defn}[Delta measures on closed orbits]
Given a closed orbit $\gamma=\{x_t\}_{t=0}^{T-1}$, define the delta measure on $\gamma$
\begin{equation}\label{eq:delta}
\mu_\gamma=\frac1T\sum_{t=0}^{T-1}\delta_{x_t},
\end{equation}
in which $\delta_x$ is the delta measure at $x$. That is, for any $f\in C(\T^2)$,
$$\int_{\T^2}f\,d\mu_\gamma=\frac1T\sum_{t=0}^{T-1}f(x_t).$$
\end{defn}

It is clear that $\mu_\gamma$ is an invariant probability measure of the cat map $M$ on $\T^2$. Moreover, by Sigmund \cite{S},
\begin{thm}\label{thm:Sigmund}
For any invariant probability measure $\mu$ of a cat map $M$ on the torus $\T^2$, there is a sequence of closed prime orbits $\{\gamma_j\}_{j=1}^\infty$ such that the delta measures $\mu_{\gamma_j}\to\mu$ weakly, that is, for any $f\in C(\T^2)$,
$$\int_{\T^2}f\,d\mu_{\gamma_j}\to\int_{\T^2}f\,d\mu,\quad\text{as }j\to\infty.$$
\end{thm}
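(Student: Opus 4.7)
The plan is to invoke the Anosov structure of $M$ on $\T^2$ to import the Bowen specification property, then follow the classical density argument that underlies Sigmund's theorem. Because a cat map is uniformly hyperbolic, the shadowing lemma furnishes, for every $\ve>0$, an integer $N=N(\ve)\in\N$ with the property that any finite chain of orbit segments of $M$, with consecutive segments separated by at least $N$ iterates, is $\ve$-shadowed by a single periodic point. (Equivalently, $M$ admits an Adler--Weiss Markov partition and is H\"older semiconjugate to a subshift of finite type, for which specification is immediate.) Throughout, I fix a finite family $f_1,\dots,f_k\in C(\T^2)$ and a tolerance $\ve>0$, and aim to produce a prime closed orbit $\gamma$ with $|\int f_i\,d\mu_\gamma-\int f_i\,d\mu|<\ve$ for every $i$; exhausting $\ve\to 0$ along a countable dense subfamily of $C(\T^2)$ then yields the desired sequence $\{\gamma_j\}$.

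First, I reduce to a finite convex combination of ergodic measures. Applying the ergodic decomposition $\mu=\int\nu_\alpha\,d\rho(\alpha)$ and truncating the simplex of components, I obtain $\sum_{j=1}^{n}c_j\nu_j$ with $\nu_j$ ergodic, $c_j\geq 0$, $\sum_j c_j=1$, that approximates $\mu$ within $\ve/3$ on each $f_i$. For each component, Birkhoff's pointwise ergodic theorem supplies a $\nu_j$-generic point $x_j\in\T^2$, and for all sufficiently large $T$ I choose integers $T_j$ with $T_j/T\to c_j$ and $\sum_j T_j=T$ so that the Birkhoff averages along $\{M^t x_j\}_{t=0}^{T_j-1}$ approximate $\int f_i\,d\nu_j$ to within $\ve/3$ uniformly in $i$.

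Finally, I glue these segments by specification. Choose $\delta>0$ so that $|f_i(x)-f_i(y)|<\ve/6$ whenever $\dist(x,y)<\delta$, and apply the specification property with tolerance $\delta$ to obtain a periodic point $y$ whose orbit $\delta$-shadows the $n$ concatenated segments $\{M^t x_j\}_{t=0}^{T_j-1}$, with connecting gaps of length $N(\delta)$. For the total period $\tilde T=T+nN(\delta)$, taking $T$ large makes the contribution of the $nN(\delta)$ connecting iterates bounded by $\ve/6$, and combining the three error sources yields $|\int f_i\,d\mu_{\gamma_y}-\int f_i\,d\mu|<\ve$. The delta measure $\mu_{\gamma_y}$ is unchanged if we replace the closed orbit of length $\tilde T$ through $y$ by the prime closed orbit it traverses, so the latter is the desired $\gamma$. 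The main obstacle is pure bookkeeping: the rounding of the weights $c_j$ into integers $T_j$, the component-dependent Birkhoff convergence rates, and the $nN(\delta)$-sized specification gaps must all be controlled simultaneously so that the individual error contributions stay below $\ve$; this is straightforward but must be organized carefully, since one chooses $n$, then $\delta$, then $T$, in that order.
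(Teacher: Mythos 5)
The paper does not supply a proof of this statement; it invokes it directly as a known theorem of Sigmund \cite{S} (Invent.\ Math.\ 11, 1970, 99--109). Your argument is a faithful reconstruction of the standard proof behind that citation: reduce $\mu$ via the ergodic decomposition to a finite convex combination $\sum_j c_j\nu_j$ of ergodic measures (these are weak-$*$ dense in the simplex of invariant measures by Krein--Milman); pick Birkhoff-generic points $x_j$ and long orbit segments in proportions approximating $c_j$; glue by the specification property into a periodic orbit whose empirical measure is weak-$*$ close to $\mu$; and pass to the prime closed orbit it traverses, which has the same empirical measure. Iterating over a countable dense family in $C(\T^2)$ with $\ve\to0$ produces the sequence $\{\gamma_j\}$.

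One small point worth tightening: the shadowing lemma by itself does not give specification. To turn a chain of prescribed orbit segments into a periodic pseudo-orbit that shadowing can then upgrade to a genuine periodic orbit, you also need topological mixing (or at least transitivity together with the spectral decomposition) to produce the connecting pieces of bounded length $N(\delta)$. For a cat map this is automatic --- hyperbolic toral automorphisms are topologically mixing --- so the conclusion is unaffected, but the phrase ``the shadowing lemma furnishes \dots specification'' should be supplemented by this observation, or one can simply invoke Bowen's specification theorem for mixing Anosov diffeomorphisms directly (as your parenthetical alternative via an Adler--Weiss Markov partition does). Apart from that, your outline is correct and is precisely the argument the paper is tacitly relying on.
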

\begin{rmk}\hfill
\begin{itemize}
\item In particular, since the Lebesgue measure $dx$ is invariant on $\T^2$, there is a sequence of closed prime orbits $\{\gamma_j\}_{j=1}^\infty$ such that the delta measures $\mu_{\gamma_j}$ converge to the Lebesgue measure weakly.

\item Let $\{\gamma_j\}_{j=1}^\infty$ be a sequence of closed prime orbits such that $\mu_{\gamma_j}\to\mu$. Suppose that $|\gamma_j|\le C$ for some uniform constant $C>0$. Since the closed prime orbits are enumerable by their lengths, there are only finitely many orbits with length bounded by $C$. Therefore, $\mu$ is itself a delta measure on some closed prime orbit. 
\end{itemize}
\end{rmk}

\subsection{Quantum dynamics on the torus}\label{sec:qtorus}
We first need to describe the space of states in the quantum system of a cat map with phase space $\T^2$. Each state is represented by $\psi\in L^2(\R)$ that is periodic in position and in momentum. This means that $\psi$ is invariant under the phase translations $\hat T_n$ for $n=(n_1,n_2)\in\Z^2$. In particular,
\begin{equation}\label{eq:theta}
\hat T_{(1,0)}\ket\psi=e^{i\theta_1}\ket\psi\quad\text{and}\quad\hat T_{(0,1)}\ket\psi=e^{i\theta_2}\ket\psi.
\end{equation}
Here, we allow the phase shifts $e^{i\theta_1}$ and $e^{i\theta_2}$ for some angle $\theta=(\theta_1,\theta_2)\in[0,2\pi)\times[0,2\pi)$, because under such phase shifts the function defines the same quantum state. It then follows from such periodicity that
$$\hat T_{(1,0)}\hat T_{(0,1)}=\hat T_{(0,1)}\hat T_{(1,0)}$$
restricted to the space of quantum states. But in the view of \eqref{eq:TuTv}, since $(1,0)\wedge(0,1)=-1$, it requires that $e^{i/\hbar}=1$. Hence,
\begin{equation}\label{eq:N}
N=\frac{1}{2\pi\hbar}\in\N.
\end{equation}
Under the conditions \eqref{eq:theta} and \eqref{eq:N}, the space of quantum states $\Hc_{N,\theta}$ is an $N$-$\dim$ Hilbert space. Moreover, $L^2(\R)$ can then be decomposed as
$$L^2(\R)=\frac{1}{(2\pi)^2}\int^\oplus\Hc_{N,\theta}\,d\theta.$$
\begin{defn}[Projector]
The projector $\Pt:\Sc'(\R)\to\Hc_{N,\theta}$ is defined as
\begin{equation}\label{eq:Ptheta}
\Pt=\sum_{n=(n_1,n_2)\in\Z^2}e^{-in_1\theta_1-in_2\theta_2}\hat T_{(1,0)}^{n_1}\hat T_{(0,1)}^{n_2}=\sum_{n=(n_1,n_2)\in\Z^2}e^{-i\theta\cdot n+i\delta_n}\hat T_n,
\end{equation}
in which $\delta_n=-n_1n_2N\pi$ by \eqref{eq:TuTv}.
\end{defn}

Let $M\in\SL(2,\Z)$ be a hyperbolic map on $\R^2$. Then by Section \ref{sec:plane}, $\hat M$ defines a quantum map that acts on $L^2(\R)$. From \eqref{eq:MTM}, we have that
$$\hat M\Pt\hat M^{-1}=\sum_{n=(n_1,n_2)\in\Z^2}e^{-i\theta\cdot n+i\delta_n}\hat T_{Mn}=\sum_{n=(n_1,n_2)\in\Z^2}e^{-i\theta\cdot M^{-1}n+i\delta_{M^{-1}n}}\hat T_n.$$
Hence,
$$\hat M\Pt=\hat P_{\theta'}\hat M,\quad\text{in which }\theta'=M^{-1}\theta+N\pi\begin{pmatrix}CD\\AB\end{pmatrix}.$$
If $\theta'=\theta\mod(2\pi)$, then $\hat M$ commutes with $\Pt$ and therefore defines an endomorphism on $\Hc_{N,\theta}$. For each $N\in\N$, such choices of $\theta$ are always possible, for example, $\theta=(0,0)$ if $N$ is even and $\theta=(\pi,\pi)$ if $N$ is odd.
\begin{defn}[Quantum cat maps]
Let $M\in\SL(2,\Z)$ be a classical cat map. Then for any $N\in\N$, there exists $\theta\in[0,2\pi)\times[0,2\pi)$ such that $\hat M:\Hc_{N,\theta}\to\Hc_{N,\theta}$. We fix such choice of $\theta$ that depends on $M$ and $N$. The operator $\hat M$ restricted on $\Hc_{N,\theta}$ is called the quantum cat map. (If there is no confusion, then we simply write $\Hc_N$.)
\end{defn}

Any quantum translation operator $\hat T_v$ acts on $\Hc_N$ only if $\hat T_v$ commutes with $\hat T_n$ for all $n\in\Z^2$. Applying \eqref{eq:TuTv} again, $e^{i(v\wedge n)/\hbar}=1$ for all $n\in\Z^2$. So $v\in\Z^2/N$. For notational convenience, we write
$$\hat T_N(n)=\hat T_{n/N}.$$
\begin{defn}[Quantization]
Let $a\in C^\infty(\T^2)$. Define its Weyl quantization as an operator on $\Hc_N$:
\begin{equation}\label{eq:OpW}
\hat a^\W=\sum_{n\in\Z^2}\tilde a(n)\hat T_N(n).
\end{equation}
Here, $a$ is called the symbol and $\tilde a(n)$ is the Fourier coefficients of $a$ that
$$a(x)=\sum_{n\in\Z^2}\tilde a(n)e^{2\pi i(n\wedge x)}.$$
\end{defn}

\begin{rmk}[$L^2$ boundedness]
We have that $\hat a^\W$ is bounded on $L^2(\T^1)$, that is,
$$\braket{\psi|\hat a^\W|\psi}\le C\braket{\psi|\psi},$$
in which $C$ depends on finite number of derivatives of $a$. See Zworski \cite[Setion 4.5]{Zw}.
\end{rmk}

Since the cat maps are linear, we have the following exact Egorov's theorem. See Part I \cite{Han} for a shoot proof.
\begin{thm}[Egorov's theorem]\label{thm:Egorov}
Let $a\in C^\infty(\T^2)$. Then
$$\hat M^{-t}\hat a^\W\hat M^t=\wh{a\circ M^t}^\W\quad\text{for all }t\in\Z.$$
\end{thm}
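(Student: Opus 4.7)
The plan is to reduce the identity to the conjugation rule for quantum translations in Proposition \ref{prop:TR}(2), combined with the symplectic invariance $Mu\wedge Mv=u\wedge v$. Iterating $\hat M\hat T_v\hat M^{-1}=\hat T_{Mv}$ and inverting gives $\hat M^{-t}\hat T_v\hat M^t=\hat T_{M^{-t}v}$ for every $t\in\Z$. Specializing to $v=n/N$ with $n\in\Z^2$ and using that $M^{-t}\in\SL(2,\Z)$ preserves $\Z^2$, I obtain
$$\hat M^{-t}\hat T_N(n)\hat M^t=\hat T_N(M^{-t}n).$$
Inserting this into the Weyl expansion \eqref{eq:OpW} of $\hat a^\W$ and substituting $m=M^{-t}n$ (a bijection of $\Z^2$) yields
$$\hat M^{-t}\hat a^\W\hat M^t=\sum_{n\in\Z^2}\tilde a(n)\hat T_N(M^{-t}n)=\sum_{m\in\Z^2}\tilde a(M^tm)\,\hat T_N(m).$$

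The remaining step is to recognize the right-hand side as $\wh{a\circ M^t}^\W$. For this I would compute the Fourier coefficients of $a\circ M^t$ directly. The symplectic invariance applied with $u=M^{-t}n$ and $v=x$ gives $n\wedge M^tx=M^{-t}n\wedge x$, so substituting into the Fourier series of $a$ and relabelling by $m=M^{-t}n$,
$$a\circ M^t(x)=\sum_{n\in\Z^2}\tilde a(n)e^{2\pi i(M^{-t}n\wedge x)}=\sum_{m\in\Z^2}\tilde a(M^tm)e^{2\pi i(m\wedge x)},$$
hence $\widetilde{a\circ M^t}(m)=\tilde a(M^tm)$. Comparing with \eqref{eq:OpW} identifies $\wh{a\circ M^t}^\W$ with the operator computed in the previous paragraph, completing the proof.

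There is no real obstacle: since $a\in C^\infty(\T^2)$, its Fourier coefficients decay faster than any polynomial, and because $\|\hat T_N(n)\|=1$ the sums converge absolutely in operator norm, so the rearrangement $n\mapsto M^tn$ is legitimate. The whole argument rests on two linear-algebra facts — $M^{-t}\Z^2=\Z^2$ and $n\wedge M^tx=M^{-t}n\wedge x$ — together with the conjugation identity for $\hat T_v$. The linearity of $M$ is precisely what makes Egorov's theorem exact here, with no error terms of the type that would appear for a nonlinear symplectomorphism.
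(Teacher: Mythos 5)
Your proof is correct and is precisely the standard argument for exact Egorov in the cat-map setting: conjugate the quantum translations $\hat T_N(n)$ via Proposition \ref{prop:TR}(2), use that $M^{-t}\in\SL(2,\Z)$ preserves the lattice $\Z^2$, reindex, and identify the resulting Fourier coefficients with those of $a\circ M^t$ using symplectic invariance. The paper itself defers the proof to Part I, which uses the same route, so there is no substantive difference to report.
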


As an immediate consequence, we have that

\begin{prop}[Invariance of semiclassical measures]
Let $\{\psi_j\}_{j=1}^\infty$ be a sequence of quasimodes of order $o(1)$ in \eqref{eq:qm}. Suppose that $\mu$ is the corresponding semiclassical measure. Then $\mu$ is invariant under $M$.
\end{prop}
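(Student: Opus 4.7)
The plan is to exploit Egorov's Theorem \ref{thm:Egorov} together with unitarity of $\hat M$ to transfer the quasimode equation into a statement about matrix elements. Throughout, I would normalize $\|\psi_j\|_{L^2}=1$, and write the quasimode relation \eqref{eq:qm} as $\hat M\psi_j = e^{i\phi_j}\psi_j + r_j$, where $\|r_j\|_{L^2} = o(1)$ by hypothesis.

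Fix $f\in C^\infty(\T^2)$. To prove $M$-invariance of $\mu$, it suffices to show that $\int_{\T^2} f\circ M\, d\mu = \int_{\T^2} f\, d\mu$. By the definition \eqref{eq:scmeasures} of the semiclassical measure applied to $f\circ M\in C^\infty(\T^2)$, the left-hand side equals $\lim_{j\to\infty}\langle\psi_j|\widehat{f\circ M}^{\W}|\psi_j\rangle$. Egorov's theorem then lets me rewrite this as
\begin{equation*}
\int_{\T^2} f\circ M\, d\mu = \lim_{j\to\infty}\langle\psi_j|\hat M^{-1}\hat f^{\W}\hat M|\psi_j\rangle = \lim_{j\to\infty}\langle \hat M\psi_j|\hat f^{\W}|\hat M\psi_j\rangle,
\end{equation*}
where unitarity of $\hat M$ on $\Hc_N$ was used in the second equality.

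The final step is to substitute $\hat M\psi_j = e^{i\phi_j}\psi_j + r_j$ and expand. The cross terms and the $\langle r_j|\hat f^{\W}|r_j\rangle$ term are controlled via the $L^2$ boundedness of $\hat f^{\W}$ (noted in the remark after \eqref{eq:OpW}) and the Cauchy--Schwarz inequality: each is bounded by a constant multiple of $\|r_j\|_{L^2}$ or $\|r_j\|_{L^2}^2$, hence is $o(1)$. The diagonal term $|e^{i\phi_j}|^2\langle\psi_j|\hat f^{\W}|\psi_j\rangle = \langle\psi_j|\hat f^{\W}|\psi_j\rangle$ survives, and its limit is $\int_{\T^2} f\, d\mu$. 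Comparing the two limits gives invariance.

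There is no real obstacle: the argument is essentially standard once Egorov's theorem is available in exact form (Theorem \ref{thm:Egorov}), and the only analytic input is the uniform bound $\|\hat f^{\W}\|_{\Hc_N\to\Hc_N}\le C(f)$, which has already been recorded in the paper. The one thing to be mildly careful about is that $C^\infty(\T^2)$ is a unital $\ast$-algebra, so testing invariance on $f$ and $f\circ M$ separately is legitimate, and the limits in \eqref{eq:scmeasures} do hold along the same subsequence realizing $\mu$ (one may always pass to a subsequence to guarantee existence of the semiclassical measure, but once $\mu$ is fixed the argument above applies verbatim to every $f\in C^\infty(\T^2)$).
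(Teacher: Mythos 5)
Your proof is correct and follows essentially the same route as the paper: apply the exact Egorov theorem to rewrite $\braket{\psi_j|\widehat{f\circ M}^\W|\psi_j}$ as $\braket{\hat M\psi_j|\hat f^\W|\hat M\psi_j}$, substitute the quasimode relation, and use the $L^2$ boundedness of $\hat f^\W$ to absorb the error terms. The paper's proof is simply a more compressed version of the same argument (it folds the Cauchy--Schwarz bookkeeping into the phrase ``by the $L^2$ boundedness of $\hat f^\W$'').
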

\begin{proof}
It suffices to prove that for any $f\in C^\infty(\T^2)$,
$$\int_{\T^2}f\,d\mu=\int_{\T^2}f\circ M\,d\mu.$$
Since $\{\psi_j\}_{j=1}^\infty$ is a sequence of quasimodes of order $o(1)$, we have that
$$\hat M\ket{\psi_j}=e^{i\phi_j}\ket{\psi_j}+o_{L^2}(1)\quad\text{for some }\phi_j\in\R.$$
Then by the $L^2$ boundedness of $\hat f^\W$ and the Egorov's theorem above,
\begin{eqnarray*}
\braket{\psi_j|\wh{f\circ M}^\W|\psi_j}&=&\braket{\psi_j|\hat M^{-1}\hat f^\W\hat M|\psi_j}\\
&=&\braket{\psi_j|\hat f^\W|\psi_j}+o_f(1).
\end{eqnarray*}
The proposition follows by taking limits of both sides as $j\to\infty$.
\end{proof}

\subsection{Coherent states and their evolution on the torus}\label{sec:coT}
Let $M$ be a classical cat map on the torus $\T^2$ and $\hat M$ be its quantization on $\Hc_{N,\theta}$ with $N=1/(2\pi\hbar)\in\N$. 

In this section, we investigate the coherent states on the torus and their evolution under the quantum cat map $\hat M$. For technical convenience, we begin from the squeezed coherent state $\ket\tco$ on the plane \eqref{eq:co}. Its evolution $\ket{t;\tco}=\hat M^t\ket\tco$ on the plane \eqref{eq:tco} has an explicit Husimi function which is given in Proposition \ref{prop:Husimiplane}. 

The squeezed coherent states $\ket{\tco,\theta}$ and $\ket{x,\tco,\theta}$ on the torus are defined via the projector $\Pt$ in \eqref{eq:Ptheta}:
$$\ket{\tco,\theta}=\Pt\ket\tco\quad\text{and}\quad\ket{x,\tco,\theta}=\Pt\ket{x,\tco}.$$
Write the evolution of $\ket{\tco,\theta}$ under $\hat M$ as $\ket{t;\tco,\theta}=\hat M^t\ket{\tco,\theta}\in\Hc_{N,\theta}$ for $t\in\Z$. We use the Husimi function on the torus to analytize $\ket{t;\tco,\theta}$. 

For any quantum state $\psi$ on the torus, define the Husimi function of $\psi$ as
\begin{equation}\label{eq:Husimitorus}
\Hc_{\tco,\psi,\theta}(x)=N\left|\braket{x,\tco,\theta|\psi}\right|^2,\quad\text{which satisfies }\int_{\T^2}N\ket{x,\tco,\theta}\bra{x,\tco,\theta}\,dx=\hat I.
\end{equation}
Then the Husimi function of $\ket{t;\tco,\theta}$
\begin{eqnarray*}
\Hc_{\tco,t,\theta}(x)&=&N\left|\braket{x,\tco,\theta|t;\tco,\theta}\right|^2\\
&=&N\left|\braket{x,\tco|\Pt|t;\tco}\right|^2\\
&=&N\left|\sum_{n\in\Z^2}e^{-i\theta\cdot n+i\delta_n}\braket{x,\tco|\hat T_n|t;\tco}\right|^2\\
&=&N\left|\sum_{n\in\Z^2}e^{-i\theta\cdot n+i\delta_n}\braket{x+n,\tco|t;\tco}\right|^2.
\end{eqnarray*}
That is, $\braket{x,\tco|\Pt|t;\tco}$ is the sum (up to some phases) of the translates of $\braket{x,\tco|t;\tco}$ in different phase space cells of size $1$. Use $x=(q',p')$ in the unstable/stable-frame of $M$ and recall that $\Fc$ is a fundamental cell of $\T^2$. By Proposition \ref{prop:Husimiplane}, the Husimi function 
$$\Hc_{\tco,t}(x)=\frac{\left|\braket{x,\tco|t;\tco}\right|^2}{2\pi\hbar}=\frac{N}{\cosh(\lambda t)}\exp\left(-\left(\frac{q'^2}{\Delta q'^2}+\frac{p'^2}{\Delta p'^2}\right)\right),$$
in which
$$\Delta q'^2=\frac{2\hbar}{1-\tanh(\lambda t)}\sim\hbar e^{2\lambda t}\quad\text{and}\quad\Delta p'=\frac{2\hbar}{1+\tanh(\lambda t)}\sim\hbar.$$
Recall that the Ehrenfect time $T_E=|\log\hbar|/\lambda$ in \eqref{eq:TEhrenfest}. Suppose that $0\le t\le\delta T_E$ for some $0\le\delta<1/2$. Then $\Hc_{\tco,t}$ is concentrated in the region
$$\left\{|q'|\lesssim\sqrt\hbar e^{\lambda t},|p'|\lesssim\sqrt\hbar\right\}\subset B_2\left(o,C\hbar^\frac12e^{\lambda t}\right)\subset\Fc,$$
in which $C>0$ is an absolute constant. Here, $B_2(o,r)\subset\T^2$ is the geodesic ball centered at the origin $o$ and with radius $r$. Therefore, within such a time frame, all the terms but the one when $n=(0,0)$ are negligible in the sum of $\Hc_{\tco,t,\theta}$. In particular,
\begin{equation}\label{eq:tcothetapt}
\Hc_{\tco,t,\theta}(x)=\begin{cases}
\frac{N}{\cosh(\lambda t)}\exp\left(-\left(\frac{q'^2}{\Delta q'^2}+\frac{p'^2}{\Delta p'^2}\right)\right)+O\left(e^{-\frac{1}{C\hbar}}\right) & \text{if }x\in B_2\left(o,C\hbar^\frac12e^{\lambda t}\right),\\
O\left(e^{-\frac{1}{C\hbar}}\right) & \text{if }x\in\Fc\setminus B_2\left(o,C\hbar^\frac12e^{\lambda t}\right).
\end{cases}
\end{equation}
Taking $x=o$ and $t=0$, we have that $\Hc_{\tco,0,\theta}(o)=N+O(e^{-\frac{1}{C\hbar}})$. This means that the squeezed coherent state $\ket{\tco,\theta}$ is asymptotically normalized in $L^2(\T^2)$:
$$\braket{\tco,\theta|\tco,\theta}=\frac{\Hc_{\tco,0,\theta}(o)}{N}=1+O\left(e^{-\frac{1}{C\hbar}}\right).$$
Since $\hat M$ preserves the $L^2$ norm, the above estimate reminds valid for $\ket{t;\tco,\theta}$:
\begin{equation}\label{eq:tcothetaL2}
\braket{t;\tco,\theta|t;\tco,\theta}=1+O\left(e^{-\frac{1}{C\hbar}}\right).
\end{equation}
Moreover, by \eqref{eq:Husimitorus} and \eqref{eq:tcothetapt}, the $L^2$ norm of $\ket{t;\tco,\theta}$ can be recovered by the integral of the Husimi function in its concentration region modulo an exponential error.
\begin{cor}\label{cor:tcothetaL2}
Let $0\le t\le\delta T_E$ for some $0\le\delta<1/2$. Then
$$\braket{t;\tco,\theta|t;\tco,\theta}=\int_{B_2\left(o,C\hbar^\frac12e^{\lambda t}\right)}\Hc_{\tco,t,\theta}(x)\,dx+O\left(e^{-\frac{1}{C\hbar}}\right).$$
\end{cor}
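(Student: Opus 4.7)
The plan is to deduce the corollary directly from the resolution of identity in \eqref{eq:Husimitorus} together with the pointwise Gaussian bound \eqref{eq:tcothetapt} for $\Hc_{\tco,t,\theta}$. First I would apply the identity
$$\int_{\T^2}N\ket{x,\tco,\theta}\bra{x,\tco,\theta}\,dx=\hat I$$
to the vector $\ket{t;\tco,\theta}$ and take the inner product with itself, which gives
$$\braket{t;\tco,\theta|t;\tco,\theta}=N\int_{\T^2}\left|\braket{x,\tco,\theta|t;\tco,\theta}\right|^2\,dx=\int_{\Fc}\Hc_{\tco,t,\theta}(x)\,dx,$$
using that integration over $\T^2$ is the same as integration over the fundamental cell $\Fc$.

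Next I would split the domain $\Fc$ into the concentration ball $B_2(o,C\hbar^{1/2}e^{\lambda t})$ and its complement in $\Fc$. Here I need to check that the ball is actually contained in $\Fc$, which follows from the hypothesis $0\le t\le \delta T_E$ with $\delta<1/2$: indeed
$$C\hbar^{\frac12}e^{\lambda t}\le C\hbar^{\frac12-\delta}\longrightarrow 0\quad\text{as }\hbar\to 0,$$
so for $\hbar$ small enough (equivalently $N$ large), the ball lies well inside $\Fc$ and its geodesic geometry on $\T^2$ coincides with the Euclidean one inherited from $\R^2$.

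For the complementary piece, I would invoke the second case of \eqref{eq:tcothetapt}, which gives the uniform bound
$$\Hc_{\tco,t,\theta}(x)=O\!\left(e^{-\frac{1}{C\hbar}}\right)\quad\text{for all }x\in\Fc\setminus B_2\!\left(o,C\hbar^{\frac12}e^{\lambda t}\right).$$
Integrating this over a set of area at most $\Vol(\Fc)=1$ produces an $O(e^{-1/(C\hbar)})$ contribution, which absorbs into the stated remainder (possibly after enlarging $C$). Combining the two pieces yields the claimed identity.

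The only conceptual care needed is bookkeeping of the constant $C$ (it may need to be increased once to accommodate both the region constant in \eqref{eq:tcothetapt} and the absorption of the area factor), and the verification that the ball does not wrap around $\T^2$, both of which are handled by the condition $\delta<1/2$. No new estimate is required beyond what is already contained in \eqref{eq:Husimitorus} and \eqref{eq:tcothetapt}, so the argument is essentially a two-line rearrangement with a triangle inequality.
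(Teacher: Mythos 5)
Your proof is correct and is precisely the argument the paper has in mind: the paper itself cites only \eqref{eq:Husimitorus} and \eqref{eq:tcothetapt} as the ingredients, and your write-up spells out exactly that — resolution of identity, restriction to the fundamental cell, inclusion of the concentration ball in $\Fc$ via $\delta<1/2$, and absorbing the exponentially small tail. Nothing is missing, and the approach matches the paper.
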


\begin{rmk}
If $t\ge T_E/2$, then $\Delta q'\gtrsim\hbar e^{\lambda t}$ reaches the size of the fundamental cell $\Fc$. That is, the concentration region of $\Hc_{\tco,t}$ spreads from $\Fc$ to other cells. Hence, the terms when $n\ne(0,0)$ may contribute in the sum of $\Hc_{\tco,0,\theta}$. This phenomenon of ``interference effects'' has been extensively studied in Faure-Nonnenmacher-De Bi\`evre \cite{FNDB}. In this paper, we restrict the evolution below $T_E/2$ so the interference effects are negligible. 
\end{rmk}

\subsection{Anti-Wick quantization}\label{sec:AW}
In the previous subsection, we discussed the coherent states and their evolution under the quantum cat map. The localization of these quantum states are described by their Husimi functions \eqref{eq:Husimitorus}. It is therefore most convenient to use the anti-Wick quantization to study the density distribution of the states.
\begin{defn}[Anti-Wick quantization]
Let $a\in L^\infty(\T^2)$. Define its anti-Wick quantization as an operator on $\Hc_N$:
\begin{equation}\label{eq:OpaW}
\hat a^\AW=N\int_{\T^2}a(x)\ket{x,\tco,\theta}\bra{x,\tco,\theta}\,dx.
\end{equation}
\end{defn}

Then we immediately have that
\begin{equation}\label{eq:AWT2}
\braket{\psi|\hat a^\AW|\psi}=N\int_{\T^2}a(x)\left|\braket{x,\tco,\theta|\psi}\right|^2\,dx=\int_{\T^2}a(x)\Hc_{\tco,\psi,\theta}(x)\,dx.
\end{equation}
In particular, for any $\Omega\subset\T^1$,
\begin{equation}\label{eq:AWT1}
\int_\Omega|\psi(q)|^2\,dq=\braket{\psi|\hat\chi_\Omega^\AW|\psi}=\int_{\Omega}\int_{\T^1}\Hc_{\tco,\psi,\theta}(q,p)\,dpdq.
\end{equation}
To accommodate the discussion of density distribution of states at small scales in the phase space $\T^2$ in Theorems \ref{thm:SSQE} and \ref{thm:sc}, we allow the classical symbols in quantizations \eqref{eq:OpW} and \eqref{eq:OpaW} to depend on the semiclassical parameter $\hbar$:
\begin{defn}[Small scale symbols]
Let $\rho\in[0,1/2)$ and $\hbar_0\in(0,1)$. We say that $a(x;\hbar)\in S_\rho(\T^2)$ if $a\in C^\infty(\T^2\times(0,\hbar_0))$ and for each multiindex $\alpha$, there is a constant $C_\alpha>0$ such that
$$\left|\partial_x^\alpha a(x;\hbar)\right|\le C_\alpha\hbar^{-\rho|\alpha|},$$
for all $x\in\T^2$ and $\hbar\in(0,\hbar_0)$.
\end{defn}

The Weyl and anti-Wick quantizations are asymptotically equivalent for symbols in $S_\rho(\T^2)$ with $\rho\in[0,1/2)$. See for example Bouzouina-De Bi\`evre \cite{BouDB} for a proof. 
\begin{lemma}\label{lemma:WAW}
Let $\rho\in[0,1/2)$ and $a(x;\hbar)\in S_\rho(\T^2)$. Then
$$\hat a^\W-\hat a^\AW=O_{L^2\to L^2}\left(\hbar^{1-2\rho}\right).$$
Hence, 
$$\braket{\psi|\hat a^\W|\psi}-\braket{\psi|\hat a^\AW|\psi}=O\left(\hbar^{1-2\rho}\right),$$
for all normalized states $\psi$.
\end{lemma}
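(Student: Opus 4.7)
The approach is to realize $\hat a^\AW$ as the Weyl quantization of a Husimi-smoothed version of $a$ and then to estimate the operator norm of the Weyl quantization of the remainder using a Calder\'on-Vaillancourt type bound for $S_\rho$ symbols. Starting from \eqref{eq:OpaW} and expanding $a(x) = \sum_{n \in \Z^2} \tilde a(n) e^{2\pi i(n\wedge x)}$, the anti-Wick operator becomes
$$\hat a^\AW = \sum_{n \in \Z^2} \tilde a(n) \cdot N \int_{\T^2} e^{2\pi i(n\wedge x)} \ket{x,\tco,\theta}\bra{x,\tco,\theta}\, dx.$$
Writing $\ket{x,\tco,\theta} = \Pt \hat T_x \ket{\tco}$ and unfolding the projector (as in the derivation of \eqref{eq:tcothetapt}), only the trivial lattice term contributes modulo an exponentially small error, so the torus integral reduces to $N \int_{\R^2} e^{2\pi i(n\wedge x)} \hat T_x \ket{\tco}\bra{\tco} \hat T_{-x}\, dx$. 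By the composition rule \eqref{eq:TuTv} this integral equals $c_n \hat T_N(n)$, where $c_n$ is the Fourier coefficient at frequency $n/N$ of the plane Husimi function of $\ket{\tco}$, a Gaussian of width $\sim \sqrt\hbar$. The normalization $c_0 = 1$ matches the resolution of identity in \eqref{eq:Husimitorus}.

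Consequently $\hat a^\AW = \widehat{b_\hbar}^\W$ with $b_\hbar = a * G_\hbar$, where $G_\hbar$ is a probability Gaussian on $\T^2$ of width $\sim \sqrt\hbar$ (periodized from the plane, the non-trivial lattice translates again giving exponentially small corrections). Since $G_\hbar$ is even, Taylor expansion yields
$$b_\hbar(x) - a(x) = \int_{\R^2}\bigl(a(x-y) - a(x) + y\cdot\nabla a(x)\bigr) G_\hbar(y)\,dy = O\Bigl(\hbar \sup_{|\alpha|=2}\|\partial^\alpha a\|_\infty\Bigr) = O(\hbar^{1-2\rho}),$$
and the analogous estimate with an additional factor $\hbar^{-\rho|\alpha|}$ holds for each derivative. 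Hence $b_\hbar - a$ lies in $S_\rho(\T^2)$ with seminorms of order $\hbar^{1-2\rho}$, and the Calder\'on-Vaillancourt theorem applied to the Weyl quantization \eqref{eq:OpW} gives $\|\widehat{b_\hbar - a}^\W\|_{L^2 \to L^2} = O(\hbar^{1-2\rho})$, which is the bound in the lemma.

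The main obstacle is the explicit identification of the Husimi smoothing: one must compute $N\int_{\R^2} e^{2\pi i(n\wedge x)} \hat T_x \ket{\tco}\bra{\tco}\hat T_{-x}\, dx$ as a scalar multiple of $\hat T_N(n)$ and verify that the scalar is the Gaussian $c_n$ with $c_0 = 1$; the anisotropy induced by the squeezing $\tco$ enters only as a harmless distortion of the quadratic form defining $c_n$. The hypothesis $\rho < 1/2$ is used both to keep $b_\hbar - a$ in an $L^2$-bounded symbol class via Calder\'on-Vaillancourt and to ensure the Gaussian width $\sqrt\hbar$ stays below the symbol's scale of variation $\hbar^\rho$, so that the Taylor-remainder estimate above actually improves the trivial $O(1)$ bound.
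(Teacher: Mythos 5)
Since the paper does not prove this lemma but simply cites Bouzouina--De Bi\`evre \cite{BouDB}, there is no internal proof to compare against; I assess your argument on its own terms. Your route --- realizing $\hat a^\AW$ as $\widehat{a*G_\hbar}^{\,\W}$ for a centered Gaussian $G_\hbar$ of width $\sim\sqrt\hbar$, Taylor-expanding under the convolution, and invoking a Calder\'on--Vaillancourt bound for $S_\rho$ symbols --- is exactly the standard argument, and is what \cite{BouDB} do for $\rho=0$; your extension to $\rho<1/2$ by tracking the $\hbar^{-\rho|\alpha|}$ losses is correctly executed and gives the claimed $O(\hbar^{1-2\rho})$. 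Two small points deserve attention. First, the smoothing kernel that makes the identity $\hat a^\AW=\widehat{a*G_\hbar}^{\,\W}$ hold is the \emph{Wigner} function of $\ket\tco$, not its Husimi function (the Husimi function of $\ket\tco$ is $G_\hbar*G_\hbar$, a Gaussian of width $\sqrt{2\hbar}$); both are even Gaussians of the same $\hbar$-scale, so your Taylor-remainder estimate and the final exponent are unaffected, but the identification of $c_n$ should be via $c_n=\braket{\tco|\hat T_{n/N}|\tco}$ (the Fourier data of the Wigner function), not of the Husimi function. Second, and more substantively: the paper's $L^2$-boundedness remark (constant depending on finitely many derivatives of $a$) is too weak to close your argument, and the naive bound $\|\widehat{b_\hbar-a}^{\,\W}\|_{L^2\to L^2}\le\sum_{n}\left|(c_n-1)\tilde a(n)\right|$ only yields $O(\hbar^{1-4\rho})$, which is vacuous once $\rho\ge1/4$. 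So the genuine Calder\'on--Vaillancourt theorem for $S_\rho$ symbols on the torus (uniform $L^2$ bound controlled by the rescaled seminorms $\hbar^{\rho|\alpha|}\|\partial^\alpha\cdot\|_\infty$) is an indispensable external input here; you are right to invoke it, but it should be cited or proved, since the paper itself does not supply it in the needed strength.
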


A direct consequence is that the semiclassical measure defined in \eqref{eq:scmeasures} are independent of the quantization.

\begin{rmk}
In Theorems \ref{thm:SSQE} and \ref{thm:sc}, we are concerned with the density distribution at small scales in the physical space and in the phase space.
\begin{enumerate}[(i).]
\item Let $B_1(q_0,r)\subset\T^1$ with $r=r(\hbar)\ge\hbar^\rho$ for $\rho\in[0,1/2)$. From \eqref{eq:AWT1},
\begin{equation}\label{eq:bqr}
\int_{B_1(q_0,r)}|\psi(q)|^2\,dq=\braket{\psi|\hat\chi_{B_1(q_0,r)}^\AW|\psi}=\int_{\Omega}\int_{B_1(q_0,r)}\Hc_{\tco,\psi,\theta}(q,p)\,dpdq.
\end{equation}

\item Let $B_2(x,r)\subset\T^2$ with $r=r(\hbar)\ge\hbar^\rho$ for $\rho\in[0,1/2)$. Then as in Part I \cite[Lemma 3.1]{Han}, there are $b^\pm_{x_0,r}\in C^\infty(\T^1\times(0,\hbar))$ such that
\begin{equation}\label{eq:bxr}
b^-_{x_0,r}\le\chi_{B_2(x_0,r)}\le b^+_{x_0,r}\quad\text{and}\quad\int_{\T^2}b^\pm_{x_0,r}(x)\,dx=\Vol(B(x_0,r))+o\left(r^2\right).
\end{equation}
In addition, $b^\pm_{x_0,r}\in S_\rho(\T^2)$. For later use in Subsection \ref{sec:nonequi}, we also require that $b^-_{x_0,r}=1$ in $B_2(x_0,2r/3)$ and $b^-_{x_0,r}=1$ in $B_2(x_0,3r/2)$. The symbols $b^\pm_{x_0,r}$ are therefore the appropriate approximation of the indicator function $\chi_{B_2(x_0,r)}$.

By Lemma \ref{lemma:WAW}, we have that for any normalized state $\psi$,
$$\braket{\psi|\hat b_{x_0,r}^\W|\psi}-\braket{\psi|\hat b_{x_0,r}^\AW|\psi}=O\left(\hbar^{1-\rho}\right).$$
With the choice of $r(\hbar)=O(|\log\hbar|^{-1/2})$ in Theorems \ref{thm:SSQE} and \ref{thm:sc}, the symbols $b^\pm_{x_0,r}\in S_\rho(\T^2)$ for all $\rho\in(0,1/2)$. Hence, Statement (ii) there applies to both Weyl and anti-Wick quantizations. 
\end{enumerate}
\end{rmk}

\section{Construction of the quasimodes}\label{sec:qm}
Throughout this section, we fix $M:\T^2\to\T^2$ as a classical cat map with Lyapunov $\lambda$. Denote $\hat M:\Hc_N\to\Hc_N$ its quantum cat map. Recall that $N=1/(2\pi\hbar)\in\N$. In This section, we construct the quasimodes in $\Hc_N$ that satisfy the conditions in Theorem \ref{thm:sc}. Their localization properties are described by the Husimi functions \eqref{eq:Husimitorus}, while the density distribution properties are studied via the anti-Wick quantization \eqref{eq:AWT2}.

Let $\gamma=\{x_t\}_{t=0}^{T-1}\subset\T^2$ be a closed prime orbit of $M$ with length $|\gamma|=T$. Here, we allow $T$ to depend on $\hbar$ and require that
\begin{equation}\label{eq:T}
T\le\delta T_E=\frac{\delta|\log\hbar|}{\lambda},
\end{equation}
in which $T_E$ is the Ehrenfest time \eqref{eq:TEhrenfest} and $\delta>0$ is to be determined later. Suppose that $\phi\in\R$. Construct the quantum state $\Psi^\gamma_\phi$ associated with $\gamma$ by
\begin{equation}\label{eq:qmT}
\ket{\Psi^\gamma_\phi}=\sum_{t=0}^{T-1}e^{-i\phi t}\hat M^t\ket{x_0,\tco,\theta}=\sum_{t=0}^{T-1}e^{-i\phi t}\hat M^t\Pt\hat T_{x_0}\ket\tco\in\Hc_N.
\end{equation}
We first describe the Husimi function of $\Psi^\gamma_\phi$ in Subsection \ref{sec:qmHusimi}. Then we use this description to establish the distribution of $\Psi^\gamma_\phi$ at various scales in Subsections \ref{sec:nonequi} and \ref{sec:scmeasures}.

\subsection{Description of the Husimi function}\label{sec:qmHusimi}
For notational simplicity, we omit the scripts in $\Psi^\gamma_\phi$ and write $\Psi$. Compute the Husimi function \eqref{eq:Husimitorus} of $\Psi$:
$$\Hc_{\tco,\Psi,\theta}(x)=N\left|\braket{x,\tco,\theta|\Psi}\right|^2=N\left|\sum_{t=0}^{T-1}e^{i\phi t}\braket{\tco|\hat T_{-x}\Pt\hat M^t\hat T_{x_0}|\tco}\right|^2$$
We know from the Egorov's theorem in Theorem \ref{thm:Egorov} that $\hat M^t\hat T_{x_0}=\hat T_{M^tx_0}\hat M^t=\hat T_{x_t}\hat M^t$. Thus,
\begin{eqnarray*}
N\left|\braket{\tco|\hat T_{-x}\Pt\hat M^t\hat T_{x_0}|\tco}\right|^2&=&N\left|\braket{\tco|\hat T_{-x}\hat T_{x_t}\Pt\hat M^t|\tco}\right|^2\\
&=&N\left|\braket{x-x_t,\tco|t;\tco,\theta}\right|^2\\
&=&\Hc_{\tco,t,\theta}(x-x_t).
\end{eqnarray*}
Now if $\delta<1/2$ in \eqref{eq:T}, then by \eqref{eq:tcothetapt}, $\Hc_{\tco,t,\theta}(x-x_t)$ is exponentially small unless $x-x_t\in B_2(o,C\hbar^\frac12e^{\lambda t})$, that is, $x\in B_2(x_t,C\hbar^\frac12e^{\lambda t})\subset B_2(x_t,C\hbar^\frac12e^{\lambda T})$. 

Next we are concerned about the separation of the balls $B_2(x_t,C\hbar^\frac12e^{\lambda t})\subset\T^2$ for $t=0,...,T-1$. By Proposition \ref{prop:periodicpts}, the prime closed orbit $\gamma$ of length $T$ lives on the lattice $\Lc_l$ of rational points with denominator 
$$l=e^{\lambda T}+e^{-\lambda T}-2\le e^{\lambda T}.$$
Since $\gamma=\{x_t\}_{t=0}^{T-1}$ is prime,
$$\left|x_t-x_s\right|\ge\frac1l\ge e^{-\lambda T},\quad\text{if }t\ne s.$$ 
Thus, if $\delta<1/4$ in \eqref{eq:T}, then for all $T\le\delta T_E$,
$$\left|x_t-x_s\right|\ge e^{-\lambda T}\ge2C\hbar^\frac12e^{\lambda T}.$$
This means that
$$B_2\left(x_t,C\hbar^\frac12e^{\lambda T}\right)\cap B_2\left(x_s,C\hbar^\frac12e^{\lambda T}\right)=\emptyset,\quad\text{if }t\ne s.$$
Combining with \eqref{eq:tcothetaL2} and Corollary \ref{cor:tcothetaL2}, we summarize the description of the Husimi function of $\Psi$ as follows. 
\begin{thm}\label{thm:PsiHusimi}
Let $0<\delta<1/4$. Then there is a constant $C_0>0$ depending only on $M$ and $\delta$ such that the following statement holds.

Suppose that $\phi\in\R$ and $\gamma=\{x_t\}_{t=0}^{T-1}$ is a closed prime prime orbit with length $|\gamma|=T\le\delta T_E$. Construct $\Psi=\Psi_\phi^\gamma$ as in \eqref{eq:qmT}. Then
$$U_\Psi=\bigcup_{t=0}^{T-1}B_2\left(x_t,C_0\hbar^\frac12e^{\lambda T}\right)\quad\text{is a disjoint union.}$$
Moreover, 
\begin{enumerate}[(i).]
\item if $x\in U_\Psi$, then there is a unique $t\in\{0,...,T-1\}$ such that
$$\Hc_{\tco,\Psi,\theta}(x)=\Hc_{\tco,t,\theta}(x-x_t)+O\left(e^{-\frac{1}{C_0\hbar}}\right),$$
in which
$$\int_{\T^2}\Hc_{\tco,t,\theta}(x-x_t)\,dx=\int_{B_2\left(o,C_0\hbar^\frac12e^{\lambda T}\right)}\Hc_{\tco,t,\theta}(x)\,dx+O\left(e^{-\frac{1}{C_0\hbar}}\right)=1+O\left(e^{-\frac{1}{C_0\hbar}}\right),$$
\item if $x\not\in U_\Psi$, then
$$\Hc_{\tco,\Psi,\theta}(x)=O\left(e^{-\frac{1}{C_0\hbar}}\right).$$
\end{enumerate}
The reminder estimates in (i) and (ii) are uniform for all $\phi\in\R$, closed prime orbits $\gamma$ with $|\gamma|\le\delta T_E$, and $x\in\T^2$.
\end{thm}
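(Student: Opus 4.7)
The proof is essentially a bookkeeping exercise around three ingredients that have already been assembled: (a) the pointwise localization estimate \eqref{eq:tcothetapt} for $\Hc_{\tco,t,\theta}$, (b) the lattice separation of prime periodic points from Proposition \ref{prop:periodicpts}, and (c) the $L^2$-mass recovery from Corollary \ref{cor:tcothetaL2} together with \eqref{eq:tcothetaL2}. The plan is to choose a single constant $C_0$ large enough to serve three purposes simultaneously: (1) to separate the balls $B_2(x_t, C_0\hbar^{1/2}e^{\lambda T})$, (2) to contain the concentration region of each $\Hc_{\tco,t,\theta}$ about the origin for $t\le T$, and (3) to absorb the polynomial pre-factors $N$ and $T\lesssim|\log\hbar|$ into an exponential of the form $e^{-1/(C_0\hbar)}$.

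First I will establish disjointness. By Proposition \ref{prop:periodicpts} the prime orbit $\gamma$ lies in the lattice $\Lc_l$ with $l\le e^{\lambda T}$, so distinct points satisfy $|x_t-x_s|\ge e^{-\lambda T}$. Under the constraint $T\le\delta T_E$ with $\delta<1/4$, one has $\hbar^{1/2}e^{\lambda T}\le\hbar^{1/2-2\delta}\to 0$, and therefore $2C_0\hbar^{1/2}e^{\lambda T}<e^{-\lambda T}$ for all sufficiently small $\hbar$. Next I will prove the pointwise statements (i) and (ii). Using $\hat M^t\hat T_{x_0}=\hat T_{x_t}\hat M^t$ from the Egorov theorem (Theorem \ref{thm:Egorov}) and the linearity of $\Pt$, as already done in the discussion preceding the statement, one obtains
$$\braket{x,\tco,\theta|\Psi^\gamma_\phi}=\sum_{s=0}^{T-1}e^{i\phi s}\braket{x-x_s,\tco|s;\tco,\theta}.$$
If $x\in B_2(x_t,C_0\hbar^{1/2}e^{\lambda T})$ for the unique $t$ permitted by disjointness, then for every $s\ne t$ the point $x-x_s$ lies outside the concentration region of $\Hc_{\tco,s,\theta}$, so \eqref{eq:tcothetapt} yields $|\braket{x-x_s,\tco|s;\tco,\theta}|^2=O(N^{-1}e^{-1/(C\hbar)})$. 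Expanding the squared modulus produces one main term $\Hc_{\tco,t,\theta}(x-x_t)$ plus at most $T^2$ off-diagonal cross terms, each bounded by an $O(1)$ factor times an exponentially small factor. Multiplying through by $N$ and using $T\lesssim|\log\hbar|$, the total error is absorbed into $e^{-1/(C_0\hbar)}$ after enlarging $C_0$, yielding (i). For $x\notin U_\Psi$ every summand is exponentially small by the same mechanism, so the full sum is as well, proving (ii).

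The integral identity inside (i) will follow by translating $y=x-x_t$, noting that $B_2(o,C_0\hbar^{1/2}e^{\lambda T})\subset\Fc$ for small $\hbar$, and invoking Corollary \ref{cor:tcothetaL2} together with \eqref{eq:tcothetaL2} to recover $\braket{t;\tco,\theta|t;\tco,\theta}=1+O(e^{-1/(C_0\hbar)})$; the integral over the complement is exponentially small directly from \eqref{eq:tcothetapt}. The main obstacle I anticipate is precisely the uniform choice of $C_0$: one must verify that the condition $\delta<1/4$ (rather than just $\delta<1/2$ as in Corollary \ref{cor:tcothetaL2}) leaves a genuine gap between the two scales $\hbar^{1/2}e^{\lambda T}$ and $e^{-\lambda T}$, and one must track that the exponentially small gains in \eqref{eq:tcothetapt} always beat the polynomial losses from the $N$ appearing in the Husimi and from the $T^2$ cross terms, uniformly in $\phi$, $\gamma$, $t$, and $x$.
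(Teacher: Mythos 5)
Your proposal is correct and follows essentially the same route as the paper: lattice separation from Proposition \ref{prop:periodicpts} gives disjointness, the Egorov identity $\hat M^t\hat T_{x_0}=\hat T_{x_t}\hat M^t$ reduces the Bargmann amplitude of $\Psi$ to a sum of translated single-time amplitudes, and the localization estimate \eqref{eq:tcothetapt} together with \eqref{eq:tcothetaL2} and Corollary \ref{cor:tcothetaL2} yield (i) and (ii). Your explicit tracking of the $T^2$ cross terms in the squared modulus is a touch more careful than the paper's informal discussion, and the minor arithmetic slip (you should verify $\hbar^{1/2}e^{2\lambda T}\le\hbar^{1/2-2\delta}\to 0$, rather than $\hbar^{1/2}e^{\lambda T}\le\hbar^{1/2-2\delta}$, to obtain the separation $2C_0\hbar^{1/2}e^{\lambda T}<e^{-\lambda T}$) does not affect the conclusion when $\delta<1/4$.
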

The proposition asserts that for all $T\le\delta T_E$ such that $\delta<1/4$, the Husimi function $\Hc_{\tco,\Psi,\theta}(x)$ of $\Psi$ is a direct sum of $\Hc_{\tco,t,\theta}(x-x_t)$ ($t=0,...,T-1$) with disjoint essential supports in $B_2(x_t,C_0\hbar^\frac12e^{\lambda T})$. The analysis in the rest of this section are based upon this assertion. For example, we immediately have the $L^2$ norm estimate of $\Psi$.

\begin{prop}[$L^2$ norm estimate]\label{prop:PsiL2}
Let $0<\delta<1/4$ and $T\le\delta T_E$. Then there is a constant $C>0$ depending only on $M$ and $\delta$ such that
$$\|\Psi\|^2_{L^2(\T^1)}=\braket{\Psi|\Psi}=T+O\left(e^{-\frac{1}{C\hbar}}\right).$$
\end{prop}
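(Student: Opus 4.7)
The plan is to invoke the resolution of the identity given right after \eqref{eq:Husimitorus}, which yields
$$\braket{\Psi|\Psi}=N\int_{\T^2}\left|\braket{x,\tco,\theta|\Psi}\right|^2\,dx=\int_{\T^2}\Hc_{\tco,\Psi,\theta}(x)\,dx,$$
and then reduce everything to the pointwise description of $\Hc_{\tco,\Psi,\theta}$ provided by Theorem \ref{thm:PsiHusimi}. In this way the $L^2$ norm is computed through the Husimi density, and all the hard analytic work has already been packaged into that theorem.

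First I would split the integral according to the disjoint decomposition $\T^2=U_\Psi\sqcup(\T^2\setminus U_\Psi)$, where $U_\Psi=\bigsqcup_{t=0}^{T-1}B_2(x_t,C_0\hbar^{1/2}e^{\lambda T})$. On the complement, part (ii) of Theorem \ref{thm:PsiHusimi} gives $\Hc_{\tco,\Psi,\theta}(x)=O(e^{-1/(C_0\hbar)})$ uniformly, and since $\Vol(\T^2\setminus U_\Psi)\le 1$, this piece contributes $O(e^{-1/(C_0\hbar)})$.

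Next, on each ball $B_2(x_t,C_0\hbar^{1/2}e^{\lambda T})$ I would apply part (i) of Theorem \ref{thm:PsiHusimi} to replace $\Hc_{\tco,\Psi,\theta}(x)$ by $\Hc_{\tco,t,\theta}(x-x_t)$ up to an error $O(e^{-1/(C_0\hbar)})$. The volume of each ball is $\lesssim\hbar e^{2\lambda T}\le\hbar^{1-2\delta}\le 1$ since $T\le\delta T_E$ with $\delta<1/4$, so integrating the error over a single ball still yields $O(e^{-1/(C_0\hbar)})$. The main term on each ball integrates to $1+O(e^{-1/(C_0\hbar)})$ by the second identity in (i). Summing over $t=0,\dots,T-1$ gives
$$\braket{\Psi|\Psi}=T+T\cdot O\!\left(e^{-1/(C_0\hbar)}\right)+O\!\left(e^{-1/(C_0\hbar)}\right).$$

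Finally, since $T\le\delta T_E\lesssim|\log\hbar|$, the factor $T$ in front of the exponential can be absorbed by slightly shrinking the constant: $T\cdot e^{-1/(C_0\hbar)}=O(e^{-1/(C\hbar)})$ for any $C>C_0$. This gives the desired estimate $\braket{\Psi|\Psi}=T+O(e^{-1/(C\hbar)})$. There is no real obstacle here—the proposition is essentially a corollary of Theorem \ref{thm:PsiHusimi}, and the only mildly delicate point is checking that the polynomial $T$-factors coming from the number of balls and the volume of each ball are harmless against the exponential gain $e^{-1/(C_0\hbar)}$, which follows from $T\lesssim|\log\hbar|$ and $\hbar e^{2\lambda T}\lesssim\hbar^{1-2\delta}$.
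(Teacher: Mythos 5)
Your proof is correct and follows essentially the same route as the paper: express $\braket{\Psi|\Psi}$ via the Husimi density using the resolution of the identity, split over the disjoint balls of $U_\Psi$ and its complement, invoke Theorem \ref{thm:PsiHusimi} to replace the integrand on each ball by $\Hc_{\tco,t,\theta}(x-x_t)$, and sum the $T$ unit contributions. Your added remark that the polynomial factor $T\lesssim|\log\hbar|$ is harmlessly absorbed into $e^{-1/(C\hbar)}$ by enlarging $C$ is a correct, if minor, bookkeeping point that the paper leaves implicit.
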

\begin{proof}
By \eqref{eq:Husimitorus} and Theorem \ref{thm:PsiHusimi}, we have that
\begin{eqnarray*}
\braket{\Psi|\Psi}&=&\int_{\T^2}\Hc_{\tco,\Psi,\theta}(x)\,dx\\
&=&\sum_{t=0}^{T-1}\int_{B_2\left(x_t,C_0\hbar^\frac12e^{\lambda T}\right)}\Hc_{\tco,\Psi,\theta}(x)\,dx+O\left(e^{-\frac{1}{C\hbar}}\right)\\
&=&\sum_{t=0}^{T-1}\int_{B_2\left(x_t,C_0\hbar^\frac12e^{\lambda T}\right)}\Hc_{\tco,t,\theta}(x-x_t)\,dx+O\left(e^{-\frac{1}{C\hbar}}\right)\\
&=&T\int_{B_2\left(o,C_0\hbar^\frac12e^{\lambda T}\right)}\Hc_{\tco,t,\theta}(x)\,dx+O\left(e^{-\frac{1}{C\hbar}}\right)\\
&=&T+O\left(e^{-\frac{1}{C\hbar}}\right).
\end{eqnarray*}
\end{proof}

From the above $L^2$ norm estimate, we now see that
\begin{cor}\label{cor:qm}
Let $0\le\delta<1/4$ and $T\le\delta T_E$. Then the normalized state $\ket\Psi_n=\ket\Psi/\sqrt{\braket{\Psi,\Psi}}$ are quasimodes of order $O(1/\sqrt T)$ with quasi-energy $e^{i\phi}$.
\end{cor}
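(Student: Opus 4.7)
The plan is to prove this by a direct telescoping computation, exactly as already foreshadowed in \eqref{eq:hatMphi}, and then divide by the $L^2$ norm supplied by Proposition \ref{prop:PsiL2}. No further dynamical input is needed.

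First I would act with $\hat M-e^{i\phi}\hat I$ on $\ket\Psi$. Factoring out $e^{i\phi}$ and using the telescoping identity
$$\left(e^{-i\phi}\hat M-\hat I\right)\sum_{t=0}^{T-1}e^{-i\phi t}\hat M^t=e^{-i\phi T}\hat M^T-\hat I,$$
I obtain
$$\left(\hat M-e^{i\phi}\hat I\right)\ket\Psi=e^{i\phi}\left(e^{-i\phi T}\hat M^T-\hat I\right)\ket{x_0,\tco,\theta}.$$
Taking $L^2(\T^1)$ norms and applying the triangle inequality, together with the unitarity of $\hat M$ on $\Hc_N$, gives
$$\left\|\left(\hat M-e^{i\phi}\hat I\right)\ket\Psi\right\|_{L^2}\le\left\|\hat M^T\ket{x_0,\tco,\theta}\right\|_{L^2}+\left\|\ket{x_0,\tco,\theta}\right\|_{L^2}=2\left\|\ket{x_0,\tco,\theta}\right\|_{L^2}.$$
By \eqref{eq:tcothetaL2} (applied with $t=0$, together with translation invariance of the $L^2$ norm under $\hat T_{x_0}$), the right-hand side equals $2+O(e^{-1/(C\hbar)})$.

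Next I would combine this with the $L^2$ norm estimate of Proposition \ref{prop:PsiL2}, which yields $\|\Psi\|_{L^2}=\sqrt T\bigl(1+O(e^{-1/(C\hbar)})\bigr)$ (in particular $\|\Psi\|_{L^2}\sim\sqrt T$ once $T\le\delta T_E$ with $\delta<1/4$ and $\hbar$ is small). Dividing, for the normalized quasimode $\ket{\Psi}_n=\ket\Psi/\|\Psi\|_{L^2}$ one gets
$$\left\|\left(\hat M-e^{i\phi}\hat I\right)\ket\Psi_n\right\|_{L^2}\le\frac{2+O(e^{-1/(C\hbar)})}{\sqrt T\bigl(1+O(e^{-1/(C\hbar)})\bigr)}=O\!\left(\frac{1}{\sqrt T}\right),$$
which is exactly \eqref{eq:qm} with remainder $R=O(1/\sqrt T)$ and quasi-energy $e^{i\phi}$.

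There is essentially no obstacle here; the only thing to be careful about is book-keeping of the exponentially small remainders coming from the projector $\Pt$, but these are absorbed harmlessly into the $O(1/\sqrt T)$ bound as soon as $T\le\delta T_E$ (so $e^{-1/(C\hbar)}$ is far smaller than any inverse power of $T$). Note that this argument uses only the telescoping structure of $\ket\Psi$ and unitarity of $\hat M$; the hypothesis that $\gamma$ is a closed orbit (so that $M^Tx_0=x_0$) is not needed for the quasimode bound itself, but will of course be crucial in the subsequent sections where the Husimi description from Theorem \ref{thm:PsiHusimi} enters.
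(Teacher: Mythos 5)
Your proof is correct and takes essentially the same route as the paper: telescope $(\hat M-e^{i\phi}\hat I)\ket\Psi$ to $e^{i\phi}(e^{-i\phi T}\hat M^T-\hat I)\ket{x_0,\tco,\theta}$, bound that by $2\|\ket{x_0,\tco,\theta}\|\approx 2$ using unitarity, and then divide by $\|\Psi\|=\sqrt{T+O(e^{-1/(C\hbar)})}$ from Proposition~\ref{prop:PsiL2}. In fact your bookkeeping is slightly more careful than the paper's one-line proof, which writes the intermediate bound loosely as $\le 2\|\Psi\|$ (which by itself would only yield $O(1)$) before jumping to the correct conclusion $O(1/\sqrt T)$; your version makes the missing step --- that the operator-norm bound acts on the single coherent state, not on $\Psi$ --- explicit.
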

\begin{proof}
In the view of \eqref{eq:hatMphi}, we have that
$$\left\|\left(\hat M-e^{i\phi}\right)\ket\Psi\right\|_{L^2(\T^2)}\le2\|\Psi\|_{L^2(\T^2)},$$
that is,
$$\left\|\left(\hat M-e^{i\phi}\right)\ket\Psi_n\right\|\le\frac{2}{\sqrt{T+O\left(e^{-\frac{1}{C\hbar}}\right)}}=O\left(\frac{1}{\sqrt T}\right),$$
by the $L^2$ norm estimate of $\Psi$ in Proposition \ref{prop:PsiL2}.
\end{proof}

\subsection{Non-equidistribution of the quasimodes at small scales}\label{sec:nonequi}
Let $\Psi=\Psi^\gamma_\phi$ \eqref{eq:qmT} be the quasimode that is associated with a closed prime orbit $\gamma=\{x_t\}_{t=0}^{T-1}$ of length $T=|\gamma|$. In this section, we show that $\Psi$ must display non-equidistribution at certain scales depending on $T$. 

The next lemma is a direct consequence of the description of the Husimi function in Proposition \ref{thm:PsiHusimi}.

\begin{lemma}
Let $0<\delta<1/4$ and $T\le\delta T_E$. Suppose that $r=r(\hbar)\ge2C_0\hbar^\frac12e^{\lambda T}$ in Theorem \ref{thm:PsiHusimi}. Then there is a constant $C>0$ depending only on $M$ and $\delta$ such that
\begin{enumerate}[(i).]
\item 
$$\begin{cases}
\braket{\Psi|\hat\chi^\AW_{B_1(q,3r)}|\Psi}\ge1+O\left(e^{-\frac{1}{C\hbar}}\right) & \text{if }B_1(q,r)\cap P_q(\gamma)\ne\emptyset,\\
\braket{\Psi|\hat\chi^\AW_{B_1(q,r/3)}|\Psi}=O\left(e^{-\frac{1}{C\hbar}}\right) & \text{if }B_1(q,r)\cap P_q(\gamma)=\emptyset,
\end{cases}$$
\item 
$$\begin{cases}
\braket{\Psi|\hat b^{-,\AW}_{x,3r}|\Psi}\ge1+O\left(e^{-\frac{1}{C\hbar}}\right) & \text{if }B_2(x,r)\cap\gamma\ne\emptyset,\\
\braket{\Psi|\hat b^{+,\AW}_{x,r/3}|\Psi}=O\left(e^{-\frac{1}{C\hbar}}\right) & \text{if }B_2(x,r)\cap\gamma=\emptyset.
\end{cases}$$
\end{enumerate}
Here, $b^\pm_{x,r}$ are given in \eqref{eq:bxr} and $P_q(x)=q$ for $x=(q,p)\in\T^2$ is the projection onto space of the position variable $q$.
\end{lemma}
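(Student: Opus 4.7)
The plan is to read off both assertions directly from Theorem \ref{thm:PsiHusimi}, which describes $\Hc_{\tco,\Psi,\theta}$ as an almost-orthogonal sum of Gaussian bumps sitting on the disjoint balls $B_2(x_t,C_0\hbar^{1/2}e^{\lambda T})$, each contributing integral $1+O(e^{-1/(C_0\hbar)})$. Using \eqref{eq:AWT2} and \eqref{eq:AWT1}, the anti-Wick expectations in both (i) and (ii) become integrals of this Husimi function against a symbol whose support is well-controlled at scale $r$. Since by hypothesis $r\ge 2C_0\hbar^{1/2}e^{\lambda T}$, the Husimi-bump scale $C_0\hbar^{1/2}e^{\lambda T}$ is much smaller than $r$, and the whole argument reduces to a geometric inclusion/exclusion of balls around the $x_t$'s.

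For part (ii), first suppose $B_2(x,r)\cap\gamma\ne\emptyset$, so some $x_{t^\ast}$ satisfies $|x-x_{t^\ast}|\le r$. By the triangle inequality, $B_2(x_{t^\ast},C_0\hbar^{1/2}e^{\lambda T})\subset B_2(x,3r/2)\subset B_2(x,2r)$, and the requirement $b^-_{x,3r}=1$ on $B_2(x,2r)$ (from the scaled condition in \eqref{eq:bxr}) combined with $b^-\ge 0$ gives
\[
\braket{\Psi|\hat b^{-,\AW}_{x,3r}|\Psi}=\int_{\T^2}b^-_{x,3r}\,\Hc_{\tco,\Psi,\theta}\,dx\ \ge\ \int_{B_2(x_{t^\ast},C_0\hbar^{1/2}e^{\lambda T})}\Hc_{\tco,\Psi,\theta}\,dx\ =\ 1+O(e^{-1/(C\hbar)})
\]
by Theorem \ref{thm:PsiHusimi}(i). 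Conversely, if $B_2(x,r)\cap\gamma=\emptyset$ then $|x-x_t|>r\ge 2C_0\hbar^{1/2}e^{\lambda T}$ for every $t$; since $b^+_{x,r/3}$ is supported in $B_2(x,r/2)$, any $y$ in this support satisfies $|y-x_t|\ge|x-x_t|-r/2>r/2>C_0\hbar^{1/2}e^{\lambda T}$, so $y\notin U_\Psi$. Then Theorem \ref{thm:PsiHusimi}(ii) and the uniform boundedness of $b^+$ give $\braket{\Psi|\hat b^{+,\AW}_{x,r/3}|\Psi}=O(e^{-1/(C\hbar)})$.

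Part (i) is the same argument after projection onto the $q$-axis. Using \eqref{eq:AWT1}, the expectation $\braket{\Psi|\hat\chi^{\AW}_{B_1(q,\rho)}|\Psi}$ is the integral of $\Hc_{\tco,\Psi,\theta}$ over the strip $B_1(q,\rho)\times\T^1$. If $B_1(q,r)\cap P_q(\gamma)\ne\emptyset$, pick $x_{t^\ast}=(q_{t^\ast},p_{t^\ast})$ with $|q-q_{t^\ast}|\le r$; then $P_q(B_2(x_{t^\ast},C_0\hbar^{1/2}e^{\lambda T}))\subset B_1(q_{t^\ast},C_0\hbar^{1/2}e^{\lambda T})\subset B_1(q,3r/2)\subset B_1(q,3r)$, so $B_2(x_{t^\ast},C_0\hbar^{1/2}e^{\lambda T})$ lies inside the strip $B_1(q,3r)\times\T^1$; the lower bound $1+O(e^{-1/(C\hbar)})$ then follows as above. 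If $B_1(q,r)\cap P_q(\gamma)=\emptyset$, then for any $(q',p)\in B_1(q,r/3)\times\T^1$ and any $t$ one has $|q'-q_t|\ge r-r/3>C_0\hbar^{1/2}e^{\lambda T}$, so $(q',p)\notin U_\Psi$ and Theorem \ref{thm:PsiHusimi}(ii) yields the exponentially small bound on integration over the strip.

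The only mildly subtle step is bookkeeping of the radii: everything hinges on the single inequality $r\ge 2C_0\hbar^{1/2}e^{\lambda T}$, which guarantees both that a bump centered within distance $r$ of $x$ fits inside the enlarged ball where $b^-_{x,3r}\equiv 1$, and that the shrunken ball where $b^+_{x,r/3}$ is supported is separated from every bump by more than $C_0\hbar^{1/2}e^{\lambda T}$. Once these inclusions are set up correctly, the remainder is a direct application of Theorem \ref{thm:PsiHusimi} together with uniform $L^\infty$ bounds on $b^\pm_{x,r}$ to absorb the volume of $\T^2$ into the exponential error.
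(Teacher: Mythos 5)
Your proof is correct and follows essentially the same route as the paper: express the anti-Wick expectations via \eqref{eq:AWT1}/\eqref{eq:AWT2} as integrals of the Husimi function, use the disjoint-bump description from Theorem \ref{thm:PsiHusimi}, and reduce everything to the ball inclusions/exclusions that hinge on $r\ge 2C_0\hbar^{1/2}e^{\lambda T}$ together with the properties of $b^\pm$ from \eqref{eq:bxr}. Your radius bookkeeping (e.g.\ $B_2(x_{t^\ast},C_0\hbar^{1/2}e^{\lambda T})\subset B_2(x,2r)$ where $b^-_{x,3r}\equiv1$, and $\supp b^+_{x,r/3}\subset B_2(x,r/2)$ disjoint from $U_\Psi$) matches the paper's computation exactly.
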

\begin{proof}
(i). If $B_1(q,r)\cap P_q(\gamma)\ne\emptyset$, then there is $x_t=(q_t,p_t)\in\gamma$ such that $q_t\in B_1(q,r)$. Hence, $B_2(x_t,C_0\hbar^\frac12e^{\lambda T})\subset P_q^{-1}(B_1(q,3r))$ since $r\ge2C_0\hbar^\frac12e^{\lambda T}$. Therefore, by \eqref{eq:bqr} and Theorem \ref{thm:PsiHusimi},
\begin{eqnarray*}
\braket{\Psi|\hat\chi^\AW_{B_1(q,3r)}|\Psi}&=&\int_{B_1(q,3r)}\int_{\T^1}\Hc_{\tco,\Psi,\theta}(q',p')\,dp'dq'\\
&\ge&\int_{B_2\left(x_t,C_0\hbar^\frac12e^{\lambda T}\right)}\Hc_{\tco,\Psi,\theta}(q',p')\,dp'dq'\\
&=&1+O\left(e^{-\frac{1}{C\hbar}}\right).
\end{eqnarray*}
On the other hand, if $B_1(q,r)\cap P_q(\gamma)=\emptyset$, then $B_2(x_t,C_0\hbar^\frac12e^{\lambda T})\cap P_q^{-1}(B_1(q,r/3))=\emptyset$ for all $t=0,...,T-1$. Hence, $U_\Psi\cap P_q^{-1}(B_1(q,r/3))=\emptyset$. By \eqref{eq:bqr} and  Theorem \ref{thm:PsiHusimi} again,
$$\braket{\Psi|\hat\chi^\AW_{B_1(q,r/3)}|\Psi}=O\left(e^{-\frac{1}{C\hbar}}\right).$$

(ii). If $B_2(x,r)\cap\gamma\ne\emptyset$, then there is $x_t\in\gamma$ such that $x_t\in B_2(x,r)$ since $r\ge2C_0\hbar^\frac12e^{\lambda T}$. Hence, $B_2(x_t,C_0\hbar^\frac12e^{\lambda T})\subset B_2(x,2r)$. Therefore, by \eqref{eq:bxr} and Theorem \ref{thm:PsiHusimi},
\begin{eqnarray*}
\braket{\Psi|\hat b^{-,\AW}_{x,3r}|\Psi}&\ge&\int_{B_2(x,2r)}\Hc_{\tco,\Psi,\theta}(x')\,dx'\\
&\ge&\int_{B_2\left(x_t,C_0\hbar^\frac12e^{\lambda T}\right)}\Hc_{\tco,\Psi,\theta}(x')\,dx'\\
&=&1+O\left(e^{-\frac{1}{C\hbar}}\right).
\end{eqnarray*}
On the other hand, if $B_1(q,r)\cap\gamma=\emptyset$, then $B_2(x_t,C_0\hbar^\frac12e^{\lambda T})\cap B_1(q,r/2)=\emptyset$ for all $t=0,...,T-1$. Hence, $U_\Psi\cap B_1(x,r/2)=\emptyset$. By \eqref{eq:bxr} and Theorem \ref{thm:PsiHusimi} again,
$$\braket{\Psi|\hat b^{+,\AW}_{x,r/3}|\Psi}\le\int_{B_2(x,r/2)}\Hc_{\tco,\Psi,\theta}(x')\,dx'=O\left(e^{-\frac{1}{C\hbar}}\right).$$
\end{proof}

Next we establish the non-equidistribution of $\Psi$ at certain scales.

\begin{thm}\label{thm:nonequi}
Let $0<\delta<1/4$ and $T\le\delta T_E=\delta|\log\hbar|/\lambda$. Then there are constant $c_1,C>0$ depending only on $M$ and $\delta$ such that for any closed prime orbit $\gamma$ with length $T$, the associated quasimode $\Psi$ in \eqref{eq:qmT} satisfies the following non-equidistribution conditions.
\begin{enumerate}[(i).]
\item For any $r\in[2C_0\hbar^\frac12e^{\lambda T},c_1T^{-1}]$, there are $q_1,q_2\in\T^1$ such that
$$\braket{\Psi|\hat\chi^\AW_{B_1(q_1,r)}|\Psi}\ge1+O\left(e^{-\frac{1}{C\hbar}}\right)\quad\text{and}\quad\braket{\Psi|\hat\chi^\AW_{B_1(q_2,r)}|\Psi}=O\left(e^{-\frac{1}{C\hbar}}\right).$$
\item For any $r\in[2C_0\hbar^\frac12e^{\lambda T},c_1T^{-\frac12}]$, then there are $x_1,x_2\in\T^2$ such that
$$\braket{\Psi|\hat b_{x_1,r}^\AW|\Psi}\ge1+O\left(e^{-\frac{1}{C\hbar}}\right)\quad\text{and}\quad\braket{\Psi|\hat b_{x_2,r}^\AW|\Psi}=O\left(e^{-\frac{1}{C\hbar}}\right).$$
\end{enumerate}
\end{thm}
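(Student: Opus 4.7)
The plan is to derive both parts directly from the preceding lemma by a pigeonhole argument applied to the finite sets $P_q(\gamma)\subset\T^1$ and $\gamma\subset\T^2$. The upper estimates correspond to balls that miss the (projected) orbit entirely, while the lower estimates come from centering at (projections of) orbit points. The only technical inconvenience is the factor $3$ discrepancy between the radius used in the hypothesis of the lemma and the radius inside its conclusion, which forces a separate rescaling in each direction.

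For the lower bounds in (i) and (ii), I would simply choose $q_1 = q_{t_0} \in P_q(\gamma)$ and $x_1 = x_{t_0} \in \gamma$ for any index $t_0$. Then $B_1(q_1, r/3) \cap P_q(\gamma)$ and $B_2(x_1, r/3) \cap \gamma$ are non-empty tautologically, so applying Lemma 4.4 with $r$ replaced by $r/3$ yields exactly
\[
\braket{\Psi|\hat\chi^\AW_{B_1(q_1,r)}|\Psi}\ge 1+O\bigl(e^{-1/(C\hbar)}\bigr),\qquad \braket{\Psi|\hat b^{-,\AW}_{x_1,r}|\Psi}\ge 1+O\bigl(e^{-1/(C\hbar)}\bigr).
\]
Note that the lower bound on $r$, namely $r \ge 2C_0\hbar^{1/2}e^{\lambda T}$, is inherited directly from the hypothesis of the lemma (and is in fact preserved when passing from $r$ to $r/3$ up to absorbing constants into $C_0$, which is harmless).

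For the upper bounds, I would use pigeonhole at radius $3r$ so that the lemma's conclusion is stated at radius $r$. For (i), the set $\bigcup_{t=0}^{T-1} B_1(q_t,3r) \subset \T^1$ has total one-dimensional Lebesgue measure at most $6rT$. If we require $6rT < 1$, i.e.\ $r < c_1/T$ with $c_1 = 1/6$, then there exists $q_2 \in \T^1$ outside this union, whence $B_1(q_2, 3r) \cap P_q(\gamma) = \emptyset$ and Lemma 4.4(i) with $r$ replaced by $3r$ gives the desired exponential upper bound. For (ii), the union $\bigcup_{t=0}^{T-1} B_2(x_t, 3r) \subset \T^2$ has area at most $9\pi r^2 T$; requiring $9\pi r^2 T < 1$, i.e.\ $r < c_1/\sqrt{T}$ with $c_1 = 1/(3\sqrt{\pi})$, produces $x_2 \in \T^2$ with $B_2(x_2, 3r) \cap \gamma = \emptyset$, and Lemma 4.4(ii) applied to $b^+_{x_2, 3r}$ (after rescaling) delivers the upper bound.

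There is no serious obstacle here: the heart of the theorem is the sharp Husimi description in Theorem 4.2, which has already been packaged into Lemma 4.4. The remaining content is just counting, and the threshold scales $T^{-1}$ in the physical space versus $T^{-1/2}$ in the phase space are dictated by the different ambient dimensions ($1$ versus $2$) in the volume estimates. The only minor subtlety worth being careful about is ensuring that $3r$ still satisfies the hypothesis $3r \ge 2C_0\hbar^{1/2} e^{\lambda T}$ in the lemma, which is automatic since $r$ already does, and that the constants $c_1$ can be chosen uniformly in $M$ and $\delta$ (depending only on these through $C_0$).
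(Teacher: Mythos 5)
Your argument is correct and follows essentially the same route as the paper: lower bounds from placing the ball at a (projected) orbit point and invoking the Husimi localization, upper bounds via pigeonhole with the thresholds $T^{-1}$ and $T^{-1/2}$ dictated by the ambient dimension, and your measure-counting pigeonhole (total length $\le 6rT<1$, total area $\le 9\pi r^2T<1$) is interchangeable with the paper's maximal disjoint packing of balls of radius $3r$. One small efficiency you forgo: for the lower bound the paper bypasses Lemma 4.4 entirely and integrates the Husimi function of Theorem \ref{thm:PsiHusimi} directly over $B_2\left(x_1,C_0\hbar^{1/2}e^{\lambda T}\right)\subset P_q^{-1}(B_1(q_1,r))$, which needs only $r\ge 2C_0\hbar^{1/2}e^{\lambda T}$, so there is no need to feed the lemma the rescaled radius $r/3$ and hence no factor-of-three loss at the bottom of the admissible range (which you correctly note is harmless anyway after adjusting $C_0$).
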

\begin{proof}
(i). Given any closed prime orbit $\gamma$ of length $T$ and $x_1=(q_1,p_1)\in\gamma$, 
$$\braket{\Psi|\hat\chi^\AW_{B_1(q,r)}|\Psi}\ge\int_{B_2\left(x_1,C_0\hbar^\frac12e^{\lambda T}\right)}\Hc_{\tco,\Psi,\theta}(x)\ge1+O\left(e^{-\frac{1}{C\hbar}}\right),$$
using Theorem \ref{thm:PsiHusimi}.

On the other hand, select a maximal family of disjoint balls $\{B_1(q_k,3r)\}_{k=1}^K\subset\T^1$. Then we have that  $K\ge cr^{-1}$ for some absolute constant $c>0$. Suppose that $r\le c/(2T)$ so $K\ge cr^{-1}\ge2T>T$. However, there are only $T$ points on the orbit $\gamma$. By the pigeon-hole principle, there is $1\le k\le K$ such that $B_1(q_k,3r)\cap P_q(\gamma)=\emptyset$. In this case, by the previous lemma,
$$\braket{\Psi|\hat\chi^\AW_{B_1(q_k,r)}|\Psi}=O\left(e^{-\frac{1}{C\hbar}}\right).$$
That is, (i) is proved by choosing $c_1=c/2$.

(ii) can be proved in the same fashion so we omit the details.
\end{proof}

\subsection{Semiclassical measures of the quasimodes}\label{sec:scmeasures}
Let $\mu$ be any invariant probability measure of the cat map $M$ on the torus $\T^2$. In this subsection, we construct a sequence of quasimodes for which the corresponding semiclassical measure is $\mu$, and in addition, they satisfy the non-equidistribution conditions as in Theorem \ref{thm:sc}.

By Theorem \ref{thm:Sigmund}, there is a sequence of closed prime orbits $\{\gamma_j\}_{j=1}^\infty$ such that the delta measures $\mu_{\gamma_j}\to\mu$ weakly. Denote $T_j=|\gamma_j|$. Then for all $f\in C(\T^2)$,
\begin{equation}\label{eq:mujmu}
\int_{\T^2}f\,d\mu_{\gamma_j}=\frac{1}{T_j}\sum_{t=0}^{T_j-1}f(x^j_t)\to\int_{\T^2}f\,d\mu\quad\text{as }j\to\infty,
\end{equation}
in which the closed prime orbit $\gamma_j=\{x^j_t\}_{t=0}^{T_j-1}$. 

If $\{T_j\}_{j=1}^\infty$ is bounded, then $\mu=\mu_\gamma$ is itself a delta measure on some closed prime orbit $\gamma$. As mentioned in the introduction, this case has been treated in Faure-Nonnenmacher-De Bi\`evre \cite{FNDB}. 

We therefore assume that $\{T_j\}_{j=1}^\infty$ is unbounded. Without loss of generality, we assume that $T_j\to\infty$ is increasing. (If not, then choose a subsequence of $\{\gamma_j\}_{j=1}^\infty$ such that the lengths are increasing.) 

Fix $0<\delta<1/4$. Let
$$N_j=\frac{1}{2\pi\hbar_j}=\left\lceil\frac{e^{\frac{\lambda T_j}{\delta}}}{2\pi}\right\rceil.$$
Then
\begin{equation}\label{eq:Tj}
T_j\le\frac{\delta|\log(2\pi N_j)|}{\lambda}=\frac{\delta|\log\hbar_j|}{\lambda}\quad\text{and}\quad T_j=\frac{\delta|\log\hbar_j|}{\lambda}+O(\hbar_j).
\end{equation}
For any $\phi_j\in\R$, construct the quantum states $\Psi^{\gamma_j}_{\phi_j}\in\Hc_{N_j}$ associated with the prime closed orbit $\gamma_j$ as in \eqref{eq:qmT}. In Subsection \ref{sec:qmHusimi}, we know that the normalized states $\ket{\Psi^{\gamma_j}_{\phi_j}}_n$ are quasimodes of order $O(T_j^{-1/2})=O(|\log\hbar_j|^{-1/2})$. 

Next, we show that the semiclassical measure induced by the normalized quasimodes 
$$\ket{\psi_j}=\ket{\Psi^{\gamma_j}_{\phi_j}}_n=\frac{\ket{\Psi^{\gamma_j}_{\phi_j}}}{\sqrt{\braket{\Psi^{\gamma_j}_{\phi_j}|\Psi^{\gamma_j}_{\phi_j}}}}$$ 
coincides with the probability measure $\mu$ on $\T^2$. 

By Theorem \ref{thm:PsiHusimi}, the Husimi function $\Hc_{\tco,\Psi_{\psi_j}^{\gamma_j},\theta_j}(x)$ of $\Psi^{\gamma_j}_{\phi_j}$ is a direct sum of $\Hc_{\tco,t,\theta_j}(x-x^j_t)$ ($t=0,...,T-1$) with disjoint essential support in $B_2(x_t^j,C_0\hbar^\frac12e^{\lambda T_j})$, module exponential errors. Together with the $L^2$ norm estimate of $\Psi_{\phi_j}^{\gamma_j}$ in Proposition \ref{prop:PsiL2}, we have that
\begin{eqnarray*}
&&\braket{\psi_j|\hat f^\AW|\psi_j}\\
&=&\frac{1}{\braket{\Psi_{\phi_j}^{\gamma_j}|\Psi_{\phi_j}^{\gamma_j}}}\braket{\Psi_{\phi_j}^{\gamma_j}|\hat f^\AW|\Psi_{\phi_j}^{\gamma_j}}\\
&=&\frac{1}{T_j+O\left(e^{-\frac{1}{C\hbar_j}}\right)}\left(\int_{\T^2}\Hc_{\tco,\Psi_{\phi_j}^{\gamma_j},\theta_j}(x)f(x)\,dx+O_f\left(\hbar_j^\frac12\right)\right)\\
&=&\frac{1}{T_j}\sum_{t=0}^{T_j-1}\int_{\T^2}\Hc_{\tco,t,\theta_j}\left(x-x^j_t\right)f(x)\,dx+O_f\left(\hbar_j^\frac12\right)\\
&=&\frac{1}{T_j}\sum_{t=0}^{T_j-1}\int_{B_2\left(x_t^j,C_0\hbar^\frac12e^{\lambda T_j}\right)}\Hc_{\tco,t,\theta_j}\left(x-x^j_t\right)f(x)\,dx+O_f\left(\hbar_j^\frac12\right)\\
&=&\frac{1}{T_j}\sum_{t=0}^{T_j-1}\left(f(x^j_t)\int_{B_2\left(x_t^j,C_0\hbar^\frac12e^{\lambda T_j}\right)}\Hc_{\tco,t,\theta_j}\left(x-x^j_t\right)\,dx+O_f\left(\hbar^\frac12e^{\lambda T_j}\right)\right)+O_f\left(\hbar_j^\frac12\right)\\
&=&\frac{1}{T_j}\sum_{t=0}^{T_j-1}f(x^j_t)\int_{B_2\left(o,C\hbar^\frac12e^{\lambda T_j}\right)}\Hc_{\tco,t,\theta_j}\left(x\right)\,dx+O_f\left(\hbar_j^{\frac12-\delta}\right)\\
&=&\frac{1}{T_j}\sum_{t=0}^{T_j-1}f(x^j_t)+O_f\left(\hbar_j^{\frac12-\delta}\right)\\
&\to&\int_{\T^2}f\,d\mu\quad\text{as }j\to\infty,
\end{eqnarray*}
in which the last step follows from \eqref{eq:mujmu}. 

Finally, we use Theorem \ref{thm:nonequi} to establish the non-equidistribution conditions in Theorem \ref{thm:sc} for $\{\psi_j\}_{j=1}^\infty$. Let $\ve>0$.

For the non-equidistribution in the physical space, observe that if $r_j=c_0(\log N_j)^{-1}$ for $c_0$ small enough, then $r_j\in[2C_0\hbar^\frac12e^{\lambda T_j},c_1T_j^{-1}]$ so Theorem \ref{thm:nonequi} applies. Note that
$$\braket{\Psi_{\phi_j}^{\gamma_j}|\Psi_{\phi_j}^{\gamma_j}}=T_j+O\left(e^{-\frac{1}{C\hbar_j}}\right)$$
by the $L^2$ norm estimate of $\Psi_{\phi_j}^{\gamma_j}$ in Proposition \ref{prop:PsiL2}. Then by Theorem \ref{thm:nonequi}, there are $q_1,q_2\in\T^1$ such that 
$$\int_{B_1(q_1,r_j)}|\psi_j|^2=\braket{\psi_j|\hat\chi^\AW_{B_1(q_1,r_j)}|\psi_j}\ge\frac{1}{T_j}+O\left(e^{-\frac{1}{C\hbar_j}}\right)\ge\frac{c}{\log N_j},$$
where $c>0$ depends only on $M$ and $\delta$ in the view of \eqref{eq:Tj}, and
$$\int_{B_1(q_2,r_j)}|\psi_j|^2=\braket{\psi_j|\hat\chi^\AW_{B_1(q_2,r)}|\psi_j}=O\left(e^{-\frac{1}{C\hbar_j}}\right).$$
However,
$$\Vol(B_1(q,r_j))=2r_j=\frac{2c_0}{\log N_j}.$$
Hence, there are $c_0>0$ and $j_0\in\N$ such that
$$\frac{\int_{B_1(q_1,r_j)}|\psi_j|^2}{\Vol(B_1(q,r_j))}\ge\frac{c}{2c_0}\ge\ve^{-1}\quad\text{and}\quad\frac{\int_{B_1(q_2,r_j)}|\psi_j|^2}{\Vol(B_1(q,r_j))}=O\left(e^{-\frac{1}{C\hbar_j}}\right)\le\ve.$$
The non-equidistribution of $\{\psi_j\}_{j=1}^\infty$ at small scale $c_0(\log N_j)^{-1/2}$ in the phase space $\T^2$ can be argued similarly so we omit the details.

\end{document}